\newcommand{\ZZ}{\mathbb{Z}}
\newcommand{\QQ}{\mathbb{Q}}
\newcommand{\prelie}{\operatorname{PreLie}}
\newcommand{\pl}{\curvearrowleft}
\newcommand{\pun}{\includegraphics[height=2.5mm]{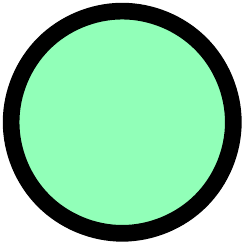}}
\newcommand{\corol}{\mathtt{Crl}}
\newcommand{\linear}{\mathtt{Lnr}}
\newcommand{\COR}{\textsc{Crls}}
\newcommand{\bop}{\textsc{B}_{+}}
\newcommand{\coul}{\operatorname{Color}}
\newcommand{\arb}[1]{\includegraphics[height=5mm]{a#1.pdf}}
\newcommand{\diam}{\diamond}
\newcommand{\sqx}{\sympawn}
\newcommand{\baro}{\overline{\Omega}}
\newcommand{\aut}{\operatorname{aut}}
\newcommand{\group}{\,\#\,}
\newtheorem{theorem}{Théorème}[section]
\newtheorem{proposition}[theorem]{Proposition}
\newtheorem{conjecture}[theorem]{Conjecture}
\newtheorem{lemma}[theorem]{Lemme}
\newtheorem{definition}{Définition}
\newtheorem{remark}[theorem]{Remarque}
\newenvironment{proof}{\begin{trivlist}\item{\bf{Preuve.}}}
  {\hfill\rule{2mm}{2mm}\end{trivlist}}
\title{Sur une série en arbres à deux paramètres}
\author{F. Chapoton\footnote{Ce travail a bénéficié du soutien du
    programme ANR CARMA.}}
\date{\today}
\begin{document}

\maketitle

\begin{abstract}
  On introduit une série en arbres $\sqx$, dont les coefficients sont
  des polynômes en $x$ à coefficients dans $\QQ(q)$. On considère les
  spécialisations de cette série en différentes valeurs de $x$, dont
  les $q$-entiers. Par une certaine limite en $x = -1/q$, on retrouve
  essentiellement la série en arbres $\Omega_q$ introduite dans un
  article antérieur, elle-même un $q$-analogue d'une série en arbres
  classique.
\end{abstract}

\selectlanguage{english}
\begin{abstract}
  One defines a new tree-indexed series $\sqx$, with coefficients that
  are polynomials in $x$ over the ring $\QQ(q)$. Several special
  evaluations of this series are obtained, in particular when $x$ is
  replaced by a $q$-integer. By taking a limit value when $x = -1/q$,
  one recovers the tree-indexed series $\Omega_q$ that was introduced
  in a previous article as a $q$-analog of a classical tree-indexed
  series $\Omega$.
\end{abstract}
\selectlanguage{french}

\section*{Introduction}

Les arbres enracinés sont des objets combinatoires simples et
classiques, qui sont apparus pour la première fois en mathématiques
dans un article de Cayley \cite{cayley} portant sur les champs de
vecteurs. Depuis lors, ils sont également intervenus
dans des domaines variés, notamment en analyse numérique dans les
travaux fondateurs de Butcher \cite{butcher} sur les méthodes de Runge-Kutta.

Des structures algébriques très riches en relation avec les arbres
enracinés ont été mises en évidence plus récemment, en particulier
plusieurs structures d'algèbres de Hopf, ainsi que les groupes qui
leur sont associés. On peut citer en particulier leur rôle dans le
point de vue de Connes et Kreimer sur la renormalisation
\cite{conneskreimer}.

La plupart de ces structures sont dérivées de la structure d'algèbre
pré-Lie libre sur les espaces vectoriels d'arbres enracinés
décorés. Au centre de tout ceci se trouve l'opérade $\prelie$, qui
décrit toutes les algèbres pré-Lie libres simultanément. La structure
d'opérade permet en particulier de définir une loi de groupe sur le
complété de l'algèbre pré-Lie libre sur un générateur, qui est un
analogue de la loi de composition des séries formelles. On appelle les
éléments de ce complété des séries en arbres.

Le sujet principal du présent article est une série en arbres
particulière $\sqx$ dont les coefficients dépendent de deux variables
$x$ et $q$. Cette série se place dans le contexte de plusieurs séries
en arbres plus simples, étudiées dans des articles antérieurs.

La première de ces séries en arbres est définie par
\begin{equation*}
  \sum_{T} \frac{1}{T!} \frac{T}{\aut(T)},
\end{equation*}
où la somme porte sur les arbres enracinés, $T!$ est une fonction
combinatoire simple appelée la factorielle de l'arbre $T$ et
$\aut(T)$ est la cardinal du groupe d'automorphismes de $T$. Cette série
apparaît naturellement comme solution formelle pour l'équation
différentielle ordinaire décrivant le flot d'un champ de vecteur. Elle
joue donc un rôle central dans la théorie de Butcher portant sur les
méthodes de Runge-Kutta.

Comme cette série est un élément du groupe des séries en arbres, on
peut considérer son inverse $\Omega$ dans ce groupe, qui joue
également un rôle en analyse numérique (``analyse rétrograde de
l'erreur''). La série en arbres $\Omega$ est un objet nettement plus
complexe, dont certains coefficients sont des nombres de Bernoulli.

Par le biais d'une relation entre $\Omega$ et une certaine famille
d'idempotents de Lie dans les algèbres de descentes des groupes
symétriques, une nouvelle série en arbres $\Omega_q$ a été définie
dans \cite{chap_omega}, dont les coefficients dépendent de la variable $q$. La
série $\Omega_q$ est un $q$-analogue de la série $\Omega$, au sens où
la spécialisation $\Omega_{q=1}$ redonne $\Omega$.

Une description du coefficient $\Omega_T$ d'un arbre enraciné $T$ dans
la série $\Omega$ a été obtenue dans \cite{wrightzhao}. Voici une
description équivalente, détaillée dans \cite{ogerM2}. On associe à
chaque arbre $T$ un polynôme $P_T$ en $x$, dont la valeur en $x=n$
compte les coloriages décroissants des sommets de $T$ par les entiers
de $0$ à $n$. Le coefficient $\Omega_T$ est alors donné par la dérivée
en $x=-1$ du polynôme $P_T$.

On obtient ici un $q$-analogue de ce résultat, qui donne une
description similaire du coefficient de $T$ dans la série $\Omega_q$,
ou plus exactement dans un légère variante $\baro_q$. Cette
description fait intervenir pour chaque arbre $T$ un polynôme en $x$ à
coefficients dans $\QQ(q)$, noté $\sqx_T$. La série en arbres $\sqx$
regroupe tous ces polynômes en un objet global, que l'on caractérise
par une équation fonctionnelle, et dont on étudie diverses
spécialisations.

Les coefficients de la série $\sqx$ admettent une interprétation
naturelle en termes de polytopes : ce sont des $q$-analogues de
polynômes d'Ehrhart, dont la théorie générale est développée dans
\cite{q_ehrhart}.

\medskip

L'article est organisé comme suit. La section \ref{notations}
introduit diverses notations sur les arbres et rappelle brièvement les
structures algébriques sur les séries en arbres qui seront
utilisées. La section \ref{principale} introduit l'équation
fonctionnelle qui caractérise la série en arbres $\sqx$, montre
qu'elle admet une unique solution et considère certains de ses
coefficients. La section \ref{evaluations} calcule les évaluations de
la série $\sqx$ pour différentes valeurs de $x$, et démontre en
particulier la relation voulue avec la série $\baro_q$. La section
\ref{autres} introduit l'opérateur de Hahn $\Delta$, décrit son
action sur les coefficients de $\sqx$ et démontre deux identités
ombrales remarquables sur les coefficients de $\baro_q$. La section
\ref{conjectures} présente deux conjectures portant sur les
coefficients de $\sqx$ et de $\baro_q$. La section \ref{auxiliaire}
démontre un résultat auxiliaire sur les séries en arbres. Enfin,
l'annexe \ref{debut} présente les premiers termes des séries en arbres
considérées.

Pour finir, quelques mots sur l'histoire de cet article, qui a
commencé par le calcul des coefficients des doubles
corolles\footnote{Arbres obtenus par greffe de copies
  de l'arbre à $2$ sommets sur une racine commune.}  dans la série
$\Omega$. Les coefficients des corolles dans la série $\Omega$ forment
la suite
\begin{equation*}
  1, -1/2, 1/6, 0, -1/30, 0, 1/42, 0, -1/30, 0, 5/66, 0, -691/2730, \dots
\end{equation*}
dans laquelle on reconnaît les célèbres nombres de Bernoulli. Pour les
doubles corolles, les premiers coefficients sont
\begin{equation*}
  1, 1/3, 1/30, -1/105, 1/210, -1/231, 191/30030, -29/2145, 2833/72930, \dots
\end{equation*}
Cette suite est bien moins connue, mais apparaît cependant dans la
littérature. Elle a été utilisée par Ramanujan, voir le numéro 9 du
chapitre 38 de \cite{berndtV}, et son prolongement par Villarino dans
\cite{villarino}. Elle est aussi étudiée par K. W. Chen dans
\cite{kwchen} sous un autre aspect.

Cette dernière référence fournit une interprétation ombrale à cette
suite de nombres. C'est en cherchant à généraliser cette description
ombrale aux mêmes coefficients dans la série $\Omega_q$ que cette
recherche a commencé.

\section{Notations}

\label{notations}

Dans tout l'article, $q$ sera une indéterminée.

On note $\Phi_d$ le polynôme cyclotomique d'ordre $d$, en la variable $q$.

On note $[n]_q$ le $q$-analogue de l'entier $n$ défini par
\begin{equation*}
  [n]_q = \frac{q^n-1}{q-1}.
\end{equation*}

Par \textbf{arbre}, on entend toujours un arbre enraciné, c'est-à-dire
un graphe fini connexe et sans cycle, muni d'un sommet distingué
appelé la racine.

On note $\# T$ le nombre de sommets d'un arbre $T$.

On appelle hauteur d'un sommet $s$ de l'arbre $T$ le nombre de sommets
dans l'unique chaîne reliant ce sommet $s$ à la racine de $T$.

La \textbf{hauteur d'un arbre} est le maximum des hauteurs de ses
sommets.

On oriente implicitement les arêtes en direction de la racine.

On appelle \textbf{feuille} les sommets qui n'ont pas d'arête entrante.

On note $\pun$ l'arbre à un sommet.

On appelle corolles les arbres de hauteur au plus $2$. Pour tout
$n\geq0$, on note $\corol_n$ la corolle à $n$ feuilles.

On appelle arbre linéaire et on note $\linear_n$ l'unique arbre à $n$
sommets de hauteur $n$.

Si $T_1,\dots,T_k$ sont des arbres, on note $\bop(T_1,\dots,T_k)$
l'arbre obtenu en greffant $T_1,\dots,T_k$ sur une nouvelle racine
commune.

\subsection{Structures algébriques}

\label{structalg}

On va utiliser dans cet article des structures algébriques assez
complexes sur les séries en arbres. On les rappelle très brièvement
ci-dessous, ainsi que certaines de leurs propriétés. Pour plus de
détails et pour les preuves, le lecteur pourra consulter
\cite{chap_omega} et \cite{idem_nara}.

On appelle \textbf{série en arbre} un élément du complété de l'algèbre
pré-Lie libre sur un générateur, par rapport à sa graduation
naturelle. Par la description connue de l'opérade $\prelie$ en termes
d'arbres enracinés \cite{chap_livernet}, une série en arbres est donc
une somme infinie d'arbres enracinés (sans étiquettes). Deux exemples
sont présentés explicitement dans l'appendice \ref{debut}. On note
$\pl$ le produit pré-Lie, qui est décrit sur les éléments de base par
la greffe : $S \pl T$ est la somme des greffes de l'arbre $T$ sur
l'arbre $S$ par ajout d'une arête entre la racine de $T$ et un sommet
de $S$.

Pour toute série en arbres $A$ et tout arbre $T$, on appelle
coefficient de $T$ dans $A$ et on note $A_T$ le coefficient dans le
développement
\begin{equation}
  \label{deva}
  A = \sum_{T} A_T \frac{T}{\aut(T)},
\end{equation}
où $\aut(T)$ est le cardinal du groupe d'automorphismes de $T$.

Pour toute série en arbres $A$, on note $A_n$ la restriction de la
somme \eqref{deva} aux arbres à $n$ sommets, c'est-à-dire la
composante homogène de degré $n$ de $A$.

On note $\Sigma_{\alpha}$ la suspension de paramètre $\alpha$, définie par
\begin{equation}
  \Sigma_{\alpha} \sum_{n\geq 1} A_n = \sum_{n \geq 1} \alpha^{n-1} A_n.
\end{equation}
Elle vérifie $\Sigma_{\alpha}\Sigma_{\beta}= \Sigma_{\alpha\beta}$.

On note $\COR$ la série en arbres particulière
\begin{equation}
  \COR = \sum_{n \geq 0} \frac{\corol_n}{n!}= \pun + \arb{10} + \frac{\arb{200}}{2} + \frac{\arb{3000}}{6} + \cdots.
\end{equation}

Sur l'espace des séries en arbres, il existe un produit associatif
$\circ$ (linéaire à gauche seulement) et un raffinement de cette
structure de monoïde sous la forme d'une opération $\diam$ à trois
arguments. Ces structures sont définies via l'opérade $\prelie$, voir
l'appendice de \cite{idem_nara}. Le produit $A \circ B$ formalise
l'insertion d'une série en arbres $B$ dans les sommets d'une série en
arbres $A$. L'opération $A \diam (B,C)$ correspond à l'insertion de
deux séries en arbres différentes, la série $B$ dans la racine et la
série $C$ dans les autres sommets de la série en arbres $A$. En
particulier, $A \diam (B,B) = A \circ B$.

L'opération $(A,B,C) \mapsto A \diam (B,C)$ est linéaire en $A$ et en
$B$. Elle vérifie
\begin{equation}
  \label{associatif}
  \COR \diam (\COR \diam (A,B),C) = \COR \diam (A, \COR \diam (B,C)).
\end{equation}

Lorsque les coefficients $A_T$ d'une série en arbres $A$ sont les
cardinaux d'ensemble finis associés aux arbres enracinés, la série $A$
est en quelque sorte une série génératrice, au sens usuel en
combinatoire, pour une certaine structure combinatoire sur les
arbres. Dans cette situation, on convient d'appeler $A$-structures sur
$T$ les éléments de l'ensemble associé à $T$.

Par exemple, il existe une unique $\COR$-structure sur chaque corolle,
et aucune sur les arbres qui ne sont pas des corolles.

Pour la série $U = \pun$, il existe une unique $U$-structure sur
l'arbre $\pun$, et aucune sur les autres arbres.

On verra dans la suite de l'article des exemples où les structures
sont des coloriages décroissants des sommets de $T$.

Soient $A$ et $B$ deux telles séries en arbres, séries
génératrices pour certaines $A$-structures et $B$-structures.

\begin{lemma}
  \label{lemme_ABC}
  La série en arbres $C$ définie par
  \begin{equation}
    C = \COR \diam (A,B)
  \end{equation}
  est la série génératrice des $C$-structures, où une $C$-structure
  sur $T$ est une triplet formé d'un sous-arbre $T_0$ contenant la
  racine, d'une $A$-structure sur l'arbre $T_0$, et d'une
  $B$-structure sur chacun des arbres qui sont les composantes
  connexes du complémentaire de $T_0$ dans $T$.
\end{lemma}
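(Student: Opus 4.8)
L'idée est de déplier la définition de l'opération $\diam$ sur les séries en arbres en termes de l'opérade $\prelie$ et de la confronter à la substitution combinatoire des structures. On rappelle (voir l'appendice de \cite{idem_nara} et la description informelle donnée plus haut) que, pour des séries en arbres $A'$, $B'$, $C'$ qui sont séries génératrices de structures, la série $A' \diam (B',C')$ est encore série génératrice de structures : une $(A' \diam (B',C'))$-structure sur un arbre $T$ est la donnée d'une partition de l'ensemble des sommets de $T$ en blocs, chacun engendrant un sous-arbre enraciné connexe et telle que la contraction de chaque bloc en un point fournisse un arbre enraciné bien défini $\overline{T}$, puis d'une $A'$-structure sur $\overline{T}$, d'une $B'$-structure sur le bloc contenant la racine de $T$ (vu comme arbre enraciné) et d'une $C'$-structure sur chacun des autres blocs. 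Le seul point délicat est la comptabilité des facteurs $\aut(\cdot)$ cachés dans \eqref{deva} : il faut vérifier qu'après division par $\aut(T)$, la somme des contributions de tous les marquages de $T$ redonne exactement le cardinal de l'ensemble de ces structures. C'est précisément ce qui est établi dans \cite{idem_nara}, et on peut donc s'y référer ; sinon, cela se vérifie par un décompte d'orbites direct, ou par récurrence sur $\# T$.

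En admettant ceci, on spécialise $A' = \COR$, $B' = A$, $C' = B$. Comme les seuls arbres portant une $\COR$-structure sont les corolles $\corol_n$, chacune en portant exactement une, la partition de $T$ est contrainte de sorte que l'arbre contracté $\overline{T}$ soit une corolle. On observe alors qu'une partition de $T$ dont la contraction est une corolle équivaut à la donnée d'un sous-arbre $T_0$ contenant la racine de $T$ : le bloc contenant la racine doit être $T_0$ et, puisque dans une corolle tout sommet non-racine est une feuille reliée directement à la racine, chacun des autres blocs doit être une composante connexe entière du complémentaire de $T_0$ dans $T$ — une partition plus fine produirait un arbre contracté de hauteur $\geq 3$, ou possédant une arête non incidente au bloc-racine, donc pas une corolle, et deux composantes distinctes ne peuvent figurer dans un même bloc faute d'être adjacentes. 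Réciproquement, un tel $T_0$ fournit une telle partition : chaque composante du complémentaire n'est reliée à $T_0$ que par une unique arête, car $T$ est un arbre, donc la contraction est bien une corolle, dont le nombre $n$ de feuilles est le nombre de composantes connexes du complémentaire de $T_0$. Via cette bijection, l'unique $\COR$-structure ne porte aucune information.

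En reportant, une $(\COR \diam (A,B))$-structure sur $T$ devient donc exactement la donnée d'un sous-arbre $T_0$ contenant la racine, d'une $A$-structure sur $T_0$ et d'une $B$-structure sur chaque composante connexe du complémentaire de $T_0$ dans $T$. La série $C = \COR \diam (A,B)$ est donc bien la série génératrice des $C$-structures annoncées.

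Le principal obstacle est le premier paragraphe, c'est-à-dire la traduction précise du $\diam$ opéradique en substitution combinatoire, avec l'ajustement des facteurs d'automorphismes ; tout le reste est une courte identification combinatoire propre aux corolles.
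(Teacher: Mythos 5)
Votre preuve est correcte et suit l'argument standard : le texte lui-même ne démontre pas ce lemme (il renvoie pour les preuves de toute la section \ref{structalg} à \cite{chap_omega} et \cite{idem_nara}), et votre déroulement — interprétation combinatoire de $\diam$ comme substitution par blocs connexes, puis observation que « contraction $=$ corolle » équivaut au choix d'un sous-arbre $T_0$ contenant la racine dont les autres blocs sont exactement les composantes connexes du complémentaire — est précisément ce que l'énoncé sous-entend. Le seul point que vous déléguez à la référence (la compatibilité des facteurs $\aut(T)$ dans la normalisation \eqref{deva}) est le même que celui que le texte délègue ; votre identification des blocs non-racine avec les composantes connexes (aucune arête entre feuilles d'une corolle, donc aucune arête entre blocs non-racine) est juste et complète.
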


Par exemple, si $A$ et $B$ sont $\pun$, alors une $C$-structure unique
existe sur les corolles, et aucune n'existe sur les autres arbres. On
reconnaît les structures pour la série $\COR$. On a donc 
\begin{equation*}
  \COR \diam (\pun,\pun) = \COR,
\end{equation*}
ce qui résulte aussi du fait que $\pun$ est une unité pour $\circ$.

Par la suite, on utilisera souvent \textit{implicitement} cette
interprétation combinatoire de l'opération $\diam$ dans la série en
arbres $\COR$.

L'énoncé suivant est du même type, mais sans hypothèse sur la nature
combinatoire des séries concernées.

\begin{lemma}
  \label{lemme_corolle}
  Soient $A$ et $B$ deux séries en arbres telles que $A = \COR \diam (\pun, B)$. Alors
  \begin{equation}
    A_{\bop(T_1,\dots,T_k)}= \prod_{i=1}^{k} B_{T_i}.
  \end{equation}
\end{lemma}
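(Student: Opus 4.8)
The plan is to reduce the statement to the combinatorial situation already covered by Lemma~\ref{lemme_ABC}. First I would note that, for a fixed tree $W=\bop(T_1,\dots,T_k)$, the coefficient $A_W$ of $A=\COR\diam(\pun,B)$ is a polynomial expression in the finitely many coefficients $B_T$ with $\#T<\#W$: this is a general property of the insertion operation $\diam$, whose homogeneous components are given by polynomial formulas in the coefficients of the inserted series, with no denominators beyond those already carried by $\COR$. The claimed right-hand side $\prod_{i=1}^{k}B_{T_i}$ is likewise polynomial in the $B_{T_i}$. Hence the asserted identity is an equality between two polynomials in the variables $(B_T)_T$, and it is enough to verify it when each $B_T$ is assigned an arbitrary nonnegative integer value.

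Given such an assignment, choose for every tree $T$ a finite set of cardinality $B_T$ and declare its elements to be the $B$-structures on $T$; no functoriality is required, so this makes $B$ the generating series of these structures in the sense of Section~\ref{structalg}. Since $\pun$ is the generating series of the $U$-structures, Lemma~\ref{lemme_ABC} applies with $\pun$ in the role of its first argument, and tells us that $\COR\diam(\pun,B)$ is the generating series of the structures on a tree $T$ consisting of a subtree $T_0$ containing the root, a $U$-structure on $T_0$, and a $B$-structure on each connected component of the complement of $T_0$ in $T$. A $U$-structure exists only on the one-vertex tree, so $T_0$ is forced to be the root alone; the components of the complement of the root in $W=\bop(T_1,\dots,T_k)$ are exactly $T_1,\dots,T_k$; and therefore the number of such structures on $W$ is $\prod_{i=1}^{k}B_{T_i}$. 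This proves the identity for all nonnegative integer values of the $B_T$, hence identically, which is the assertion.

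Alternatively, one can compute directly: inserting $\pun$ at the root of a corolla $\corol_n$ leaves the root a single vertex, while inserting $B$ at each of its $n$ leaves turns $\corol_n$ into $\bop(B,\dots,B)$ with $n$ copies of $B$, so that $\COR\diam(\pun,B)=\sum_{n\ge0}\tfrac1{n!}\bop(B,\dots,B)$; extracting the coefficient of $W=\bop(T_1,\dots,T_k)$, only the term $n=k$ contributes, and one concludes after counting the $k!/\prod_j m_j!$ reorderings of the multiset $\{T_1,\dots,T_k\}$ (with $m_j$ the multiplicities of its distinct isomorphism classes) and using the factorisation $\aut\bigl(\bop(T_1,\dots,T_k)\bigr)=\bigl(\prod_j m_j!\bigr)\prod_i\aut(T_i)$. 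In either route the one point requiring care is the bookkeeping of the automorphism factors and of the weights $1/n!$ in $\COR$ when translating between the normalisation $A=\sum_T A_T\,T/\aut(T)$ and the plain coefficient of a tree; the first route is preferable precisely because this bookkeeping is entirely hidden inside Lemma~\ref{lemme_ABC}.
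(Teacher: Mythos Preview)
The paper does not actually supply a proof of this lemma: it is stated immediately after Lemma~\ref{lemme_ABC}, introduced as ``du m\^eme type, mais sans hypoth\`ese sur la nature combinatoire des s\'eries concern\'ees'', and then the text moves on. So there is nothing to compare against line by line.

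Your proposal is correct, and both routes work. The first route---viewing the identity as a polynomial identity in the coefficients $(B_T)_T$ and reducing to the case where these are nonnegative integers so that Lemma~\ref{lemme_ABC} applies---is exactly the kind of argument the paper's phrasing invites: it explains precisely why the combinatorial Lemma~\ref{lemme_ABC} suffices even when $B$ has no combinatorial meaning. The second route is a direct expansion; your bookkeeping with the multiplicities $m_j$ and the factorisation $\aut\bigl(\bop(T_1,\dots,T_k)\bigr)=\bigl(\prod_j m_j!\bigr)\prod_i\aut(T_i)$ is correct and gives the claimed coefficient after cancellation with the $1/k!$ from $\COR$. Either argument would serve as the missing proof; the first is cleaner and closer to the paper's narrative, the second is more self-contained.
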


La suspension agit sur le produit $\diam$ comme suit :
\begin{equation}
  \label{formule_susp}
  \Sigma_{\alpha} A \diam (B,C) = A \diam (\Sigma_{\alpha} B, \alpha \Sigma_{\alpha} C).
\end{equation}

\section{Définition principale}

\label{principale}

On introduit dans cette section une série en arbres $\sqx$, définie par
une équation fonctionnelle simple, et on en donne quelques propriétés
élémentaires.

L'équation fonctionnelle est la suivante :
\begin{equation}
  \label{main}
  q \Sigma_q \sqx - \COR \diam (\sqx, -\pun) = q(1+(q-1)x) \COR \diam (\pun, q\Sigma_q \sqx ) - \pun.
\end{equation}

On va montrer qu'elle admet une solution unique.

En considérant la composante homogène de degré $1$ de cette équation,
on obtient que le terme de degré $1$ de toute solution est $(1+q x) \pun$.

On peut écrire \eqref{main} sous la forme suivante :
\begin{equation}
  \label{recu_main}
  q \Sigma_q \sqx - \sqx = \underbrace{\left(\COR \diam (\sqx, -\pun) - \sqx\right)} + q(1+(q-1)x) \COR \diam (\pun, q\Sigma_q \sqx ) - \pun.
\end{equation}

Considérons la composante homogène de degré $n$ de cette égalité. Le
membre de gauche est exactement $q^n-1$ fois la composante de degré
$n$ de $\sqx$. Le membre de droite fait intervenir seulement des
composantes de $\sqx$ de degré strictement inférieur à $n$. En effet,
la différence (soulignée) entre les deux premiers termes du membre de
droite ne contient plus les composantes homogènes de $\sqx$ de degré
$n$. On peut donc calculer les composantes de $\sqx$ par
récurrence. Il en résulte que la solution de \eqref{main} existe et
est unique. Pour les premiers termes, voir l'appendice \ref{debut}.

On note que ce même raisonnement marche aussi pour toute
spécialisation de la variable $x$.

La relation \eqref{main} peut s'écrire élégamment sous la forme
\begin{equation}
  (-\pun) \group \sqx = (q\Sigma_q \sqx) \group \big{(}-q(1+(q-1)x)\pun\big{)},
\end{equation}
où le produit $\group$, défini par
\begin{equation}
  \label{BCH}
  x \group y = x + \COR \diam (y, x),
\end{equation}
est associatif. La formule \eqref{BCH} est un analogue dans le cadre
pré-Lie de la formule de Baker-Campbell-Hausdorff, voir par exemple
\cite{agracev} pour un énoncé équivalent (exprimé sans arbres).

\begin{proposition}
  Le coefficient $\sqx_T$ est le quotient d'un polynôme en $q$ et $x$
  par un produit de polynômes cyclotomiques en $q$.
\end{proposition}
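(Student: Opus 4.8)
On raisonnera par récurrence sur le nombre de sommets $n=\#T$, en exploitant la récurrence triangulaire que fournit l'équation \eqref{recu_main}. L'idée directrice est d'introduire le sous-anneau $R\subset\QQ(q,x)$ obtenu en localisant $\QQ[q,x]$ le long de la partie multiplicative engendrée par les polynômes cyclotomiques $\Phi_d(q)$ : la proposition affirme précisément que $\sqx_T\in R$ pour tout arbre $T$. On se servira de trois propriétés de $R$ : il contient $q$ et le facteur $q(1+(q-1)x)$ ; il est stable par multiplication par un élément de $\QQ[q,x]$ ; et, puisque $q^n-1=\prod_{d\mid n}\Phi_d(q)$, il contient $1/(q^n-1)$ pour tout $n\geq1$. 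Le cas $n=1$ est immédiat, la composante de degré $1$ de \eqref{main} donnant $\sqx_{\pun}=1+qx\in R$.

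Pour l'hérédité, on supposera $\sqx_{T'}\in R$ pour tout arbre $T'$ vérifiant $\#T'\leq n-1$, et on prendra la composante homogène de degré $n$ de \eqref{recu_main}. Le membre de gauche vaut $(q^n-1)\sqx_n$. Au membre de droite, le terme $-\pun$ disparaît en degré $n\geq2$, et les deux termes restants ne font intervenir — c'est exactement l'argument de triangularité déjà utilisé pour l'existence et l'unicité de $\sqx$ — que des composantes de $\sqx$ de degré $\leq n-1$. On rendra ce point quantitatif grâce aux descriptions combinatoires de $\diam$ contre $\COR$ : le lemme \ref{lemme_corolle} donne directement $(\COR\diam(\pun,q\Sigma_q\sqx))_{\bop(T_1,\dots,T_k)}=\prod_{i=1}^k(q\Sigma_q\sqx)_{T_i}$ avec $\#T_i\leq n-1$, et le lemme \ref{lemme_ABC}, appliqué par linéarité (tout $\tfrac{S}{\aut(S)}$ est une série génératrice combinatoire), exprime $(\COR\diam(\sqx,-\pun))_T$ comme une somme signée des $\sqx_{T_0}$ sur les sous-arbres $T_0$ de $T$ contenant la racine, le seul terme avec $\#T_0=n$ étant $\sqx_T$, que la copie soustraite de $\sqx$ annule.

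Il restera alors une vérification de nature formelle : comme $\Sigma_q$ se borne à multiplier la composante de degré $k$ par $q^{k-1}\in\QQ[q]$ et comme l'opération $\COR\diam(-,-)$, définie via l'opérade $\prelie$ sans référence à $q$ ni à $x$, transforme les coefficients de ses arguments en combinaisons $\QQ$-linéaires de leurs produits, le coefficient de tout arbre $T$ de taille $n$ au membre de droite de \eqref{recu_main} est une combinaison $\QQ$-linéaire de produits de puissances de $q$, du polynôme $q(1+(q-1)x)$ et de coefficients $\sqx_{T_i}$ avec $\#T_i\leq n-1$ ; l'hypothèse de récurrence et la stabilité de $R$ le placent dans $R$, puis la division par $q^n-1$, licite puisque $1/(q^n-1)\in R$, donne $\sqx_T\in R$. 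Le point le plus délicat est la triangularité invoquée ci-dessus, c'est-à-dire l'absence de toute composante de $\sqx$ de degré $\geq n$ au membre de droite en degré $n$ ; une fois celle-ci admise — elle l'est déjà dans le texte — tout le reste est de la comptabilité sur les anneaux de coefficients.
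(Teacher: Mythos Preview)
Your proof is correct and follows exactly the same approach as the paper's: induction on the degree via the triangular recurrence \eqref{recu_main}, noting that at each step one divides a $\QQ$-linear combination of products of earlier coefficients by $q^n-1=\prod_{d\mid n}\Phi_d(q)$. The paper compresses this into two sentences, whereas you have made the ring $R$ and the use of Lemmas~\ref{lemme_ABC} and~\ref{lemme_corolle} explicit, but the substance is identical.
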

\begin{proof}
  Ça résulte de la récurrence via la formule \eqref{recu_main}, car
  c'est vrai en degré $1$, et à chaque étape on divise par $q^n-1$ une
  combinaison linéaire de produits de coefficients précédents.
\end{proof}

\begin{proposition}
  Le coefficient $\sqx_T$ est un polynôme en $x$ de degré au plus $\#
  T$.
\end{proposition}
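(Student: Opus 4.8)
The plan is to prove the bound $\deg_x \sqx_T \leq \# T$ by induction on $\# T$, using the recursive formula \eqref{recu_main}. For $\# T = 1$ the coefficient is $1 + qx$, which has degree $1 = \# T$ in $x$, so the base case holds. For the inductive step, fix $n \geq 2$ and suppose the claim holds for all trees with fewer than $n$ vertices; I want to show $\deg_x \sqx_T \leq n$ for every tree $T$ with $\# T = n$. From \eqref{recu_main}, after dividing by $q^n - 1$, the degree-$n$ component $\sqx_n$ is expressed as a combination of lower-degree components of $\sqx$ through the three terms on the right-hand side, so it suffices to control the $x$-degree of each of those terms when restricted to degree $n$.

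The key step is to track how the operation $\COR \diam (-,-)$ combines $x$-degrees. Using the combinatorial description of $\diam$ in $\COR$ (Lemma \ref{lemme_ABC}), a coefficient of $\COR \diam (B,C)$ at a tree $T$ with $\#T = n$ is a sum over subtrees $T_0$ containing the root, of $B_{T_0}$ times the product of the $C_{T_i}$ over the connected components $T_i$ of the complement; since $\#T_0 + \sum_i \#T_i = n$, the inductive hypothesis gives that each such summand has $x$-degree at most $\#T_0 + \sum_i \#T_i = n$. Hence the term $\COR \diam (\sqx, -\pun)$ restricted to degree $n$ has $x$-degree at most $n$ (here $C = -\pun$ contributes degree $0$, so the bound is even just $\#T_0 \leq n$), and likewise $\COR \diam (\pun, q\Sigma_q \sqx)$ restricted to degree $n$ has $x$-degree at most $n - 1$ (since $B = \pun$ forces $\#T_0 \geq 1$ and contributes degree $0$). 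Multiplying the latter by $q(1 + (q-1)x)$ raises its $x$-degree by $1$, to at most $n$. The term $-\pun$ has $x$-degree $0$, and $\sqx$ itself on the right of the underbrace has been cancelled in degree $n$. Dividing by $q^n - 1$, which is an invertible scalar in $\QQ(q)$ not involving $x$, does not change the $x$-degree, so $\deg_x \sqx_n \leq n$, completing the induction.

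The main obstacle is bookkeeping the $x$-degree through the term $q(1+(q-1)x)\COR \diam (\pun, q\Sigma_q \sqx)$: one must verify carefully that the inner series enters only through components of degree at most $n-1$, so that the multiplication by the degree-one polynomial $1+(q-1)x$ brings the total back up to exactly $n$ and no higher. This is where the precise form of the underbraced cancellation in \eqref{recu_main} matters — the difference $\COR \diam (\sqx, -\pun) - \sqx$ must genuinely drop the degree-$n$ part of $\sqx$, which follows because the only subtree $T_0 = T$ contributes exactly $\sqx_T$ (the empty complement case), and that term is removed. Everything else is routine degree arithmetic, and the suspension $\Sigma_q$ plays no role in $x$-degrees since it only rescales by powers of $q$.
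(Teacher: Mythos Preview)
Your proof is correct and follows exactly the approach the paper takes: induction on $\#T$ via the recursion \eqref{recu_main}, with the base case $\sqx_{\pun}=1+qx$. The paper's own proof is a single sentence (``Cette condition est vraie en degr\'e $1$, et est pr\'eserv\'ee par la r\'ecurrence via la formule \eqref{recu_main}''), and you have simply unpacked that sentence in detail; one small slip is that in the term $\COR\diam(\pun,q\Sigma_q\sqx)$ it is the \emph{first} argument $A=\pun$ (not $B$) that forces $T_0$ to be the single root vertex, but your degree count is unaffected.
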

\begin{proof}
  Cette condition est vraie en degré $1$, et est préservée par la
  récurrence via la formule \eqref{recu_main}.
\end{proof}

\begin{figure}[h!]
  \centering
  \includegraphics[height=1.5cm]{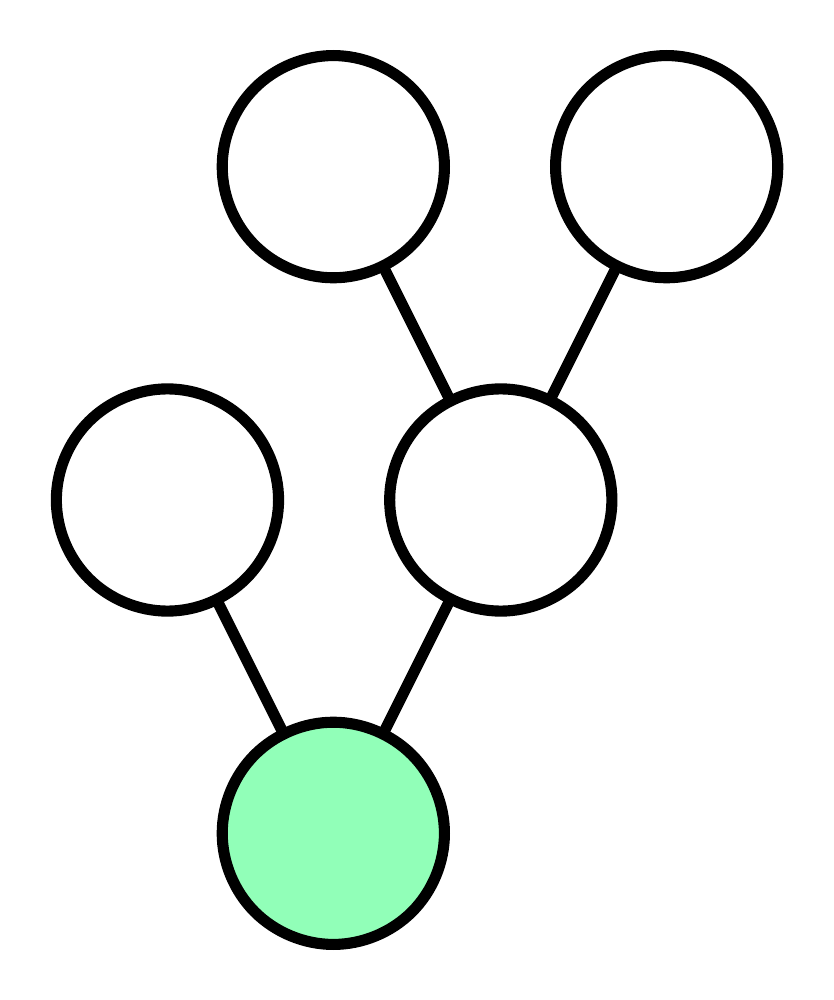}
 \caption{Un arbre $T$ sur $5$ sommets.}
  \label{fig:arbre5}
\end{figure}

Le numérateur du coefficient $\sqx_T$ pour l'arbre $T$ de la
figure \ref{fig:arbre5} est
\begin{multline*}
  (q x + 1) (q^2 x + q + 1) (q^3 x + q^2 + q + 1) \\((q^9 + 2 q^8 + 2 q^7+ 2 q^6 + q^5 )x^2 + (2 q^8  + 4 q^7  + 5 q^6  + 6 q^5+ 6 q^4  + 3 q^3 + q^2 ) x\\ + q^7 + 2 q^6 + 3 q^5 + 4 q^4  + 4 q^3 + 3 q^2 + 2 q + 1)
\end{multline*}
et son dénominateur est $[2]_q [3]_q [4]_q [5]_q$.

\begin{remark}
  Les coefficients du numérateur de $\sqx_T$ ne sont pas toujours des
  entiers positifs. Le plus petit exemple ayant des coefficients
  négatifs est fourni par l'arbre à $10$ sommets de la figure
  \ref{fig:contrex10}. C'est aussi le cas pour les corolles ayant au
  moins $12$ sommets.
\end{remark}

\begin{figure}[h!]
  \centering
  \includegraphics[height=1cm]{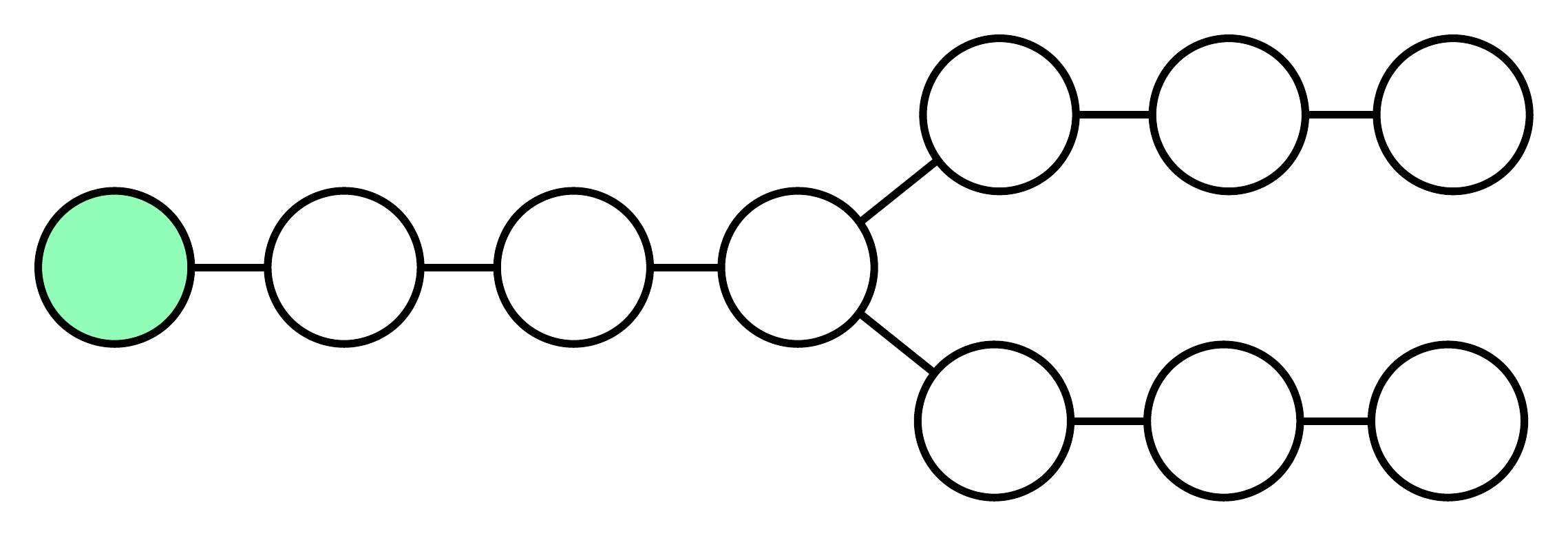}
  \caption{Contre-exemple avec $10$ sommets.}
  \label{fig:contrex10}
\end{figure}

\subsection{Cas des corolles}

Par les méthodes décrites dans \cite{chap_omega}, en utilisant un
quotient de l'opérade $\prelie$, on déduit de l'équation \eqref{main}
une équation
\begin{equation}
  q G(q t) - e^{-t} G(t) = q(1+(q-1)x) e^{q(1+q x)t} - 1
\end{equation}
pour la série génératrice exponentielle
\begin{equation}
  G(t) = \sum_{n \geq 0} \sqx_{\corol_n} \frac{t^n}{n!}.
\end{equation}

Par exemple, on obtient pour la corolle à trois feuilles le coefficient
\begin{equation*}
  \frac{ (q x + 1)  (q^2x + q + 1)  (q^4 \Phi_3 x^2 + (2 q^5 + 2 q^4  + 3 q^3 + 2 q^2) x + \Phi_3 \Phi_4) }{\Phi_2 \Phi_3 \Phi_4}
\end{equation*}

Il semble que les coefficients des corolles aient des dénominateurs
simples et réguliers.

\begin{conjecture}
  \label{conj_denominateur}
  Le dénominateur de $\sqx_{\corol_n}$ est le produit $\prod_{d=2}^{n+1} \Phi_d$.
\end{conjecture}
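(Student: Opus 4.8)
To prove the claimed formula for the denominator of $\sqx_{\corol_n}$, the plan is to first solve in closed form the $q$-difference equation already recorded in the excerpt for $G(t)=\sum_{n\ge 0}\sqx_{\corol_n}t^{n}/n!$. Write $a=q(1+(q-1)x)$, so that $(a-1)/(q-1)=1+qx=\sqx_{\pun}$. Extracting the coefficient of $t^{n}/n!$ in $qG(qt)-e^{-t}G(t)=a\,e^{q(1+qx)t}-1$ gives, for $n\ge 1$, the recursion $(q^{n+1}-1)\sqx_{\corol_n}=a\,(q(1+qx))^{n}+\sum_{k=0}^{n-1}\binom{n}{k}(-1)^{n-k}\sqx_{\corol_k}$, with $\sqx_{\corol_0}=1+qx$. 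One then checks (by inverting the operator $H(t)\mapsto qH(qt)-e^{-t}H(t)$ on exponentials $e^{\psi t}$, or directly against this recursion) that
\begin{equation*}
  \sqx_{\corol_n}=\frac{1}{(q-1)^{n}}\sum_{m=0}^{n}\binom{n}{m}(-1)^{n-m}q^{m}\,\frac{a^{m+1}-1}{q^{m+1}-1}=\frac{1+qx}{(q-1)^{n}}\,\Sigma_n ,
\end{equation*}
where $\Sigma_n=\sum_{m=0}^{n}\binom{n}{m}(-1)^{n-m}q^{m}[m+1]_a/[m+1]_q$ with $[m+1]_a:=1+a+\dots+a^{m}$. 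Now $1+qx$ reduces modulo any $\Phi_d$ to the nonzero polynomial $1+\zeta_d x$ ($\zeta_d$ a primitive $d$-th root of unity), and for $d\ge 2$ the polynomial $[m+1]_a$ reduces to $\sum_{j=0}^{m}\bar a^{j}$ with $\bar a=\zeta_d(1+(\zeta_d-1)x)$ a non-constant polynomial in $x$, hence not a root of unity, so this sum is nonzero; thus $1+qx$ and every $[m+1]_a$ are coprime to all $\Phi_d$, and the denominator of $\sqx_{\corol_n}$ is governed entirely by the cyclotomic valuations $v_{\Phi_d}(\Sigma_n)$ together with the factor $(q-1)^{-n}$.

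Fix $d\ge 2$. In $\Sigma_n$ only the terms with $d\mid m+1$ have a pole along $\Phi_d$, each a simple pole, so $v_{\Phi_d}(\Sigma_n)\ge -1$; for $d>n+1$ no such term occurs, giving $v_{\Phi_d}(\sqx_{\corol_n})\ge 0$. For $2\le d\le n+1$ I would compute the residue of $\Sigma_n$ at $q=\zeta_d$: writing $m+1=d\ell$ and using $[d\ell]_q=(q^{d\ell}-1)/(q-1)\sim \ell\,d\,\zeta_d^{-1}(q-\zeta_d)/(\zeta_d-1)$ near $q=\zeta_d$, one finds
\begin{equation*}
  \operatorname{Res}_{q=\zeta_d}\Sigma_n=(-1)^{n+1}\,\frac{\zeta_d-1}{d(\bar a-1)}\sum_{\ell=1}^{\lfloor (n+1)/d\rfloor}\binom{n}{d\ell-1}\,\varepsilon^{\ell}\,\frac{\bar a^{d\ell}-1}{\ell},
\end{equation*}
with $\varepsilon=1$ if $d$ is even and $\varepsilon=-1$ if $d$ is odd. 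Since $\bar a$ has degree exactly one in $x$, the summand $\ell=L:=\lfloor (n+1)/d\rfloor$ carries the unique monomial of top degree $dL$, whose coefficient $\binom{n}{dL-1}\varepsilon^{L}(\zeta_d(\zeta_d-1))^{dL}/L$ is nonzero because $0\le dL-1\le n$; hence the sum is a nonzero polynomial in $x$, the residue is nonzero, and $v_{\Phi_d}(\Sigma_n)=-1$ exactly, i.e.\ $v_{\Phi_d}(\sqx_{\corol_n})=-1$.

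It remains to show $v_{q-1}(\Sigma_n)\ge n$, so that the factor $(q-1)^{-n}$ disappears. Put $f(m)=q^{m}[m+1]_a/[m+1]_q$, so that $\Sigma_n=\Delta^{n}f(0)$, $\Delta$ being the forward difference in $m$. From $a=1+(q-1)(1+qx)$ one gets $[m+1]_a=\sum_{k\ge 0}\binom{m+1}{k+1}(1+qx)^{k}(q-1)^{k}$ and $[m+1]_q=\sum_{k\ge 0}\binom{m+1}{k+1}(q-1)^{k}$; cancelling the common factor $m+1$ via $\binom{m+1}{k+1}=\frac{m+1}{k+1}\binom{m}{k}$, both $[m+1]_a/(m+1)$ and $[m+1]_q/(m+1)$ are power series in $q-1$ with constant term $1$ whose coefficient of $(q-1)^{r}$ is a polynomial in $m$ of degree $\le r$. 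The Cauchy-product recursion then shows the same for their quotient $[m+1]_a/[m+1]_q$, and multiplying by $q^{m}=\sum_{k}\binom{m}{k}(q-1)^{k}$ shows that the coefficient of $(q-1)^{N}$ in $f(m)$ is a polynomial in $m$ of degree $\le N$. Applying $\Delta^{n}$ annihilates these coefficients for all $N<n$, so $v_{q-1}(\Sigma_n)\ge n$ and $v_{q-1}(\sqx_{\corol_n})\ge 0$. Combining the three valuation statements, the denominator of $\sqx_{\corol_n}$ is exactly $\prod_{d=2}^{n+1}\Phi_d$.

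The hard part is the tension between the two competing denominators: the $(q-1)^{n}$ produced by iterating the recursion, and the $\prod_{d=2}^{n+1}\Phi_d$ produced by the $q$-integers $[m+1]_q$. One must show the former cancels completely and the latter survives with multiplicity exactly one; concretely the two non-vanishing statements — that the residue sum above is a nonzero polynomial in $x$, and the degree bound forcing $v_{q-1}(\Sigma_n)\ge n$ — are the crux, and the closed form for $\sqx_{\corol_n}$ is what makes both of them accessible. (A variant of the same scheme, replacing $[m+1]_q$ by its value and keeping track of the extra terms, should also yield the exact numerator's coprimality to $\prod_{d=2}^{n+1}\Phi_d$, which is really the content of ``denominator \emph{equal to}'' rather than ``dividing''.)
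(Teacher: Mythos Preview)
The paper does \emph{not} prove this statement: it is recorded as a conjecture, and the only evidence offered is the sentence ``On a vérifié cette conjecture pour $n\le 25$''. There is therefore no proof in the paper to compare your argument against.

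Your proposal is thus not a paraphrase but a genuine attempt to settle the conjecture, and as far as I can see it succeeds. The closed form you state can be verified cleanly: with $u=t/(q-1)$ and $g_m=q^{m}(a^{m+1}-1)/(q^{m+1}-1)$, the series $G(t)=e^{-u}\sum_{m\ge 0}g_m\,u^{m}/m!$ satisfies
\[
qG(qt)-e^{-t}G(t)=e^{-qu}\sum_{m\ge 0}(q^{m+1}-1)g_m\,\frac{u^{m}}{m!}=e^{-qu}\bigl(a\,e^{qau}-e^{qu}\bigr)=a\,e^{q(a-1)u}-1=a\,e^{bt}-1,
\]
since $q(a-1)/(q-1)=q(1+qx)=b$; extracting coefficients gives exactly your formula for $\sqx_{\corol_n}$. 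The three valuation computations are then all sound. For $2\le d\le n+1$ the residue of $\Sigma_n$ at $q=\zeta_d$ is the polynomial in $x$ you wrote, and your top-degree argument (degree $dL$ with nonzero leading coefficient $(\zeta_d(\zeta_d-1))^{dL}$) correctly forces it to be nonzero, so $v_{\Phi_d}(\sqx_{\corol_n})=-1$. For $d>n+1$ no term has a pole. For $d=1$, your expansion of $f(m)=q^{m}[m+1]_a/[m+1]_q$ in powers of $q-1$ is legitimate because $[m+1]_q|_{q=1}=m+1\neq 0$; the only point to make explicit is that $(1+qx)^{k}=((1+x)+(q-1)x)^{k}$ itself contributes further nonnegative powers of $q-1$, so the coefficient of $(q-1)^{r}$ in $[m+1]_a/(m+1)$ really is a polynomial in $m$ of degree at most $r$, and your finite-difference conclusion $v_{q-1}(\Sigma_n)\ge n$ follows. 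Since the a~priori denominator coming from the closed form is already a product of cyclotomic polynomials times $(q-1)^{n}$, these three facts pin down the denominator as exactly $\prod_{d=2}^{n+1}\Phi_d$.

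In short, you have gone beyond the paper and given what appears to be a complete proof of its Conjecture on the corolla denominators.
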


On a vérifié cette conjecture pour $n \leq 25$.

\subsection{Cas des arbres linéaires}

Par les méthodes décrites dans \cite{chap_omega}, en utilisant le
morphisme standard de l'opérade $\prelie$ vers l'opérade associative,
on déduit de l'équation \eqref{main} l'équation suivante :
\begin{equation}
  \label{recu_linear}
  G(q t) - (1-t) G = q(1+(q-1)x) t (1+G(q t)) -t,
\end{equation}
pour la série génératrice ordinaire
\begin{equation}
  G(t) = \sum_{n \geq 1} \sqx_{\linear_n} t^n.
\end{equation}

\begin{proposition}
  Le coefficient de l'arbre linéaire à $n$ sommets est donné par
  \begin{equation}
    \label{formule_linear}
    \sqx_{\linear_n} = (1+q x) \prod_{i=2}^{n} \frac{[i]_q+q^i x}{[i]_q}.
  \end{equation}
\end{proposition}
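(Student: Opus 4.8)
The plan is to turn the functional equation \eqref{recu_linear} into a first-order recurrence for the numbers $a_n := \sqx_{\linear_n}$ and then solve that recurrence by a telescoping product.

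First I would substitute $G(t)=\sum_{n\geq 1}a_n t^n$ and $G(qt)=\sum_{n\geq 1}a_n q^n t^n$ into \eqref{recu_linear} and read off the coefficient of $t^n$ on both sides, with the convention $a_0=0$. The left-hand side $G(qt)-(1-t)G(t)$ contributes $a_n(q^n-1)+a_{n-1}$. The right-hand side contributes the constant $t$-term $q(1+(q-1)x)-1$ (in degree $1$ only) plus $q(1+(q-1)x)\,q^{n-1}a_{n-1}$ coming from $q(1+(q-1)x)\,t\,G(qt)$. In degree $1$ this reads $a_1(q-1)=q(1+(q-1)x)-1=(q-1)(1+qx)$, hence $a_1=1+qx$, recovering the degree-$1$ term of $\sqx$ already noted after \eqref{main}.

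For $n\geq 2$ the same comparison gives $a_n(q^n-1)+a_{n-1}=q^n(1+(q-1)x)\,a_{n-1}$, that is
\[
  a_n(q^n-1)=a_{n-1}\bigl(q^n+q^n(q-1)x-1\bigr).
\]
Using $q^n-1=(q-1)[n]_q$ I would factor the right-hand bracket as $(q-1)\bigl([n]_q+q^n x\bigr)$ and the left factor as $(q-1)[n]_q$; dividing out $q-1$ leaves the clean recurrence $a_n=a_{n-1}\dfrac{[n]_q+q^n x}{[n]_q}$. Iterating this down to $a_1=1+qx$ yields precisely \eqref{formule_linear}.

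I do not expect any genuine obstacle: the only things needing a little care are the bookkeeping of the two low-degree terms on the right of \eqref{recu_linear} and the observation that $q^n+q^n(q-1)x-1$ is divisible by $q-1$, which is immediate from the definition of $[n]_q$. One could equally well avoid deriving the recurrence and simply verify that the product in \eqref{formule_linear} satisfies \eqref{recu_linear} term by term, but the direct derivation is just as short.
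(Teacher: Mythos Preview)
Your argument is correct and follows essentially the same route as the paper: both rely on the one-term recurrence in $n$ obtained by extracting the coefficient of $t^n$ in \eqref{recu_linear}. The only difference is cosmetic: the paper states the product formula and checks it against the recurrence, whereas you derive the recurrence $a_n = a_{n-1}\,([n]_q+q^n x)/[n]_q$ first and then telescope it to reach \eqref{formule_linear}; as you yourself note, the two directions are equivalent and equally short.
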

\begin{proof}
  En effet, en degré $1$ on trouve bien $1+q x$. La vérification que la
  formule \eqref{formule_linear} satisfait la récurrence correspondant
  à l'équation \eqref{recu_linear} est un calcul sans difficulté.
\end{proof}

\section{Évaluations diverses}

\label{evaluations}

On considère diverses évaluations de la série $\sqx$. En particulier,
on relie les valeurs lorsque $x$ est un $q$-entier $[n]_q$ à
l'énumération des coloriages décroissants, au sens large pour $n$
positif et au sens strict pour $n$ négatif.

\subsection{Valeur en $x=0$}

L'équation \eqref{main} devient en $x=0$ l'équation
\begin{equation}
  \label{main_0}
  q \Sigma_q Z - \COR \diam (Z, -\pun) = q \COR \diam (\pun, q\Sigma_q Z) -\pun,
\end{equation}
pour la série $Z = \sqx\mid_{x=0}$.

Soit $E$ la série définie par
\begin{equation}
  \label{defi_E}
  E = \sum_{T} \frac{T}{\aut(T)}.
\end{equation}

La série $E$ est une série génératrice (au sens de la section \ref{structalg}),
avec une unique $E$-structure sur chaque arbre enraciné.

Alors on a, par le lemme \ref{lemme_ABC}, 
\begin{equation}
  \label{prop_E}
  E = \COR \diam (\pun, E).
\end{equation}
On en déduit par la formule de suspension \eqref{formule_susp} que
\begin{equation}
  \label{auxi_E_1}
  q \Sigma_q E = q \COR \diam (\pun, q\Sigma_q E).
\end{equation}

Par ailleurs, la proposition \ref{prop_gen} et \eqref{prop_E} entraînent que
\begin{equation}
  \label{auxi_E_2}
  \COR \diam (E, -\pun) = \pun.
\end{equation}

On déduit des égalités \eqref{auxi_E_1} et \eqref{auxi_E_2} que $E$ est solution de
\eqref{main_0}.

Par unicité de la solution, on obtient
\begin{proposition}
  La valeur de $\sqx$ en $x=0$ est la série $E$ définie par \eqref{defi_E}.
\end{proposition}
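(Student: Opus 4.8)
The plan is to verify that $E$ satisfies the functional equation \eqref{main_0}, since the solution of that equation is unique (by the recursive argument immediately following \eqref{recu_main}, which works for any specialisation of $x$, in particular $x=0$). Thus it suffices to assemble the two auxiliary identities \eqref{auxi_E_1} and \eqref{auxi_E_2} into the shape of \eqref{main_0}.

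First I would establish \eqref{prop_E}, namely $E = \COR \diam (\pun, E)$. By Lemma \ref{lemme_ABC}, the right-hand side is the generating series of structures on $T$ consisting of a subtree $T_0$ containing the root, a $\pun$-structure on $T_0$, and an $E$-structure on each connected component of $T \setminus T_0$. A $\pun$-structure exists only on the one-vertex tree, so $T_0$ must be the root alone; then the components of the complement are exactly the maximal subtrees hanging from the root, each carrying its unique $E$-structure. Hence there is exactly one such structure on every $T$, which matches the definition \eqref{defi_E} of $E$. Applying the suspension formula \eqref{formule_susp} with $\alpha = q$ to \eqref{prop_E} gives
\begin{equation*}
  \Sigma_q E = E \diam (\Sigma_q \pun, q \Sigma_q E) = \COR \diam (\pun, q \Sigma_q E),
\end{equation*}
wait — more carefully, \eqref{formule_susp} reads $\Sigma_\alpha A \diam (B,C) = A \diam (\Sigma_\alpha B, \alpha \Sigma_\alpha C)$, so $\Sigma_q \bigl(\COR \diam (\pun, E)\bigr) = \COR \diam (\Sigma_q \pun, q \Sigma_q E) = \COR \diam (\pun, q \Sigma_q E)$ since $\Sigma_q \pun = \pun$. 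Multiplying by $q$ yields \eqref{auxi_E_1}. Next, \eqref{auxi_E_2} follows by combining \eqref{prop_E} with the (forthcoming) Proposition \ref{prop_gen}, which presumably expresses an inversion property of $\COR$ relative to the pre-Lie BCH-type product $\group$ of \eqref{BCH}; concretely $\COR \diam (E, -\pun) = \pun$ says that $-\pun$ and $E$ are mutual inverses for $\group$ in the sense $(-\pun) \group E = \pun$, which is exactly \eqref{prop_E} rewritten.

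Finally I would substitute: the left-hand side of \eqref{main_0} with $Z = E$ is $q \Sigma_q E - \COR \diam (E, -\pun)$, which by \eqref{auxi_E_1} and \eqref{auxi_E_2} equals $q \COR \diam (\pun, q \Sigma_q E) - \pun$, and this is precisely the right-hand side of \eqref{main_0}. Hence $E$ solves \eqref{main_0}, and by uniqueness $\sqx\mid_{x=0} = E$.

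The main obstacle is not any hard computation but rather the bookkeeping around the suspension identity \eqref{formule_susp}: one must track the powers of $q$ carefully, using $\Sigma_q \pun = \pun$ and $\Sigma_q \COR \ne \COR$, to see that the factor $q$ lands in the right place so that $q \Sigma_q E$ appears on both sides of \eqref{main_0} in matching form. The combinatorial verification of \eqref{prop_E} via Lemma \ref{lemme_ABC} is routine, and the appeal to Proposition \ref{prop_gen} for \eqref{auxi_E_2} is a black box at this point in the text; assuming it, the argument closes immediately by uniqueness.
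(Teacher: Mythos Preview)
Your proposal is correct and follows essentially the same route as the paper: establish \eqref{prop_E} via Lemma~\ref{lemme_ABC}, suspend to get \eqref{auxi_E_1}, invoke Proposition~\ref{prop_gen} for \eqref{auxi_E_2}, and conclude by uniqueness of the solution of \eqref{main_0}. One small correction to your parenthetical remark: the identity $\COR \diam (E,-\pun)=\pun$ translates via \eqref{BCH} into $(-\pun)\group E = 0$, not $=\pun$ (the neutral element for $\group$ is $0$), and this is \emph{not} literally \eqref{prop_E} rewritten; the actual link is Proposition~\ref{prop_gen} applied with $A=\pun$, using that $\COR\diam(\pun,E)=E$ --- but since you correctly treat Proposition~\ref{prop_gen} as a black box, this does not affect the argument.
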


\subsection{Valeur en $x=[n]_q$ pour $n$ positif ou nul}

Pour tout entier $n \in \ZZ$, l'équation \eqref{main} devient en
$x=[n]_q$ l'équation
\begin{equation}
  \label{main_1}
  q \Sigma_q Z - \COR \diam (Z, -\pun) = q^{n+1} \COR \diam (\pun, q\Sigma_q Z ) - \pun,
\end{equation}
pour la série $Z = \sqx\mid_{x=[n]_q}$.

On remarque que le cas $n= -1$ est particulier, l'unique solution
étant identiquement nulle. Le cas $n=0$ est celui du paragraphe précédent.

On suppose dans cette section que $n$ est positif ou nul.

Pour chaque arbre $T$, on définit un polynôme en $q$ comme suit. Soit
$\leq$ la relation d'ordre partiel naturelle sur les sommets de $T$, avec la
racine pour minimum.

\begin{definition}
  Un $n$-coloriage décroissant de $T$ est une application
  décroissante $c$ de $T$ dans l'ensemble $\{0,\dots,n\}$.
\end{definition}

On pose
\begin{equation}
  F_T^{(n)} = \sum_{c\in \coul(T,n)} q^{\sigma(c)},
\end{equation}
où la somme porte sur les $n$-coloriages décroissants de $T$ et
$\sigma(c)$ est la somme des valeurs de $c$.

Ces polynômes sont déterminés par la relation
\begin{equation}
  \label{calcul_Fq}
  F^{(n)}_{\bop(T_1,\dots,T_k)}= \sum_{j=0}^{n} q^j \prod_{i=1}^k F^{(j)}_{T_i}.
\end{equation}
En effet, si la couleur de la racine est $j$, les sous-arbres $T_i$ sont
coloriés par des entiers inférieurs ou égaux à $j$.

Par exemple, le polynôme $F^{(1)}_T$ pour l'arbre $T$ de la figure
\ref{fig:arbre5} est
\begin{equation}
  q^5 + 3 q^4 + 3 q^3 + 2 q^2 + q + 1.
\end{equation}

Soit $F^{(n)}$ la série en arbres définie par
\begin{equation}
  F^{(n)} = \sum_{T} F_T^{(n)} \frac{T}{\aut(T)}.
\end{equation}

On a
\begin{equation}
  \label{prop_Fn}
  F^{(n)} = E + \COR \diam (q\Sigma_q F^{(n-1)}, E).
\end{equation}

En effet, on distingue les sommets de couleur $0$ et le reste. Soit
tous les sommets ont couleur $0$, et on a la série $E$, soit on a la
greffe d'arbres de couleur $0$ sur un arbre colorié par
$\{1,\dots,n\}$. Ceci donne exactement le second terme du membre de droite.

Par ailleurs, la proposition \ref{prop_gen} et \eqref{prop_Fn}
entraînent que
\begin{equation}
  \label{auxi_Fn_1}
  \COR \diam (F^{(n)}, -\pun) = \pun + q\Sigma_q F^{(n-1)} .
\end{equation}

Considérons la série $q \Sigma_q F^{(n)}$. Le coefficient d'un arbre
$T$ dans cette série est la somme
\begin{equation*}
    q\Sigma_q F_T^{(n)} = \sum_{c} q^{\sigma(c)}
\end{equation*}
portant sur les coloriages décroissants par $\{1,\dots,n+1\}$. On distingue
deux types de tels coloriages. Si la couleur de la racine est $n+1$,
les sous-arbres sont coloriés par $\{1,\dots,n+1\}$. Sinon, l'arbre
$T$ est colorié par $\{1,\dots,n\}$. On obtient la formule
\begin{equation}
  \label{auxi_Fn_2}
  q \Sigma_q F^{(n)} = q \Sigma_q F^{(n-1)} + q^{n+1} \COR \diam (\pun, q\Sigma_q F^{(n)} ).
\end{equation}

On déduit des égalités \eqref{auxi_Fn_1} et \eqref{auxi_Fn_2} que $F$
est solution de \eqref{main_1}.

Par unicité de la solution, on obtient
\begin{theorem}
  \label{valeur_n_positif}
  La valeur de $\sqx$ en $x=[n]_q$ est la série $F^{(n)}$, dont les
  coefficients comptent les coloriages décroissants par $\{0,\dots,n\}$.
\end{theorem}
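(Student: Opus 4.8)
The plan is to check directly that the series $F^{(n)}$ is a solution of the equation \eqref{main_1}, obtained by specializing \eqref{main} at $x=[n]_q$ (note that $q(1+(q-1)[n]_q)=q^{n+1}$ since $(q-1)[n]_q=q^n-1$), and then to conclude by the uniqueness of the solution of \eqref{main_1}, which holds by the very same degree-by-degree recursion argument used for $\sqx$ itself and which is valid for every specialization of $x$.

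The substantive input consists of two identities about $F^{(n)}$. For the first, split a decreasing coloring of a tree $T$ by $\{0,\dots,n\}$ according to the set of vertices receiving the color $0$: either every vertex is colored $0$, contributing the series $E$ of \eqref{defi_E}, or the vertices of positive color form a subtree $T_0$ through the root, decreasingly colored by $\{1,\dots,n\}$, onto which arbitrary trees with all vertices colored $0$ are grafted. By Lemma \ref{lemme_ABC} — with $B=E$ and $A$ the $q$-weighted generating series of decreasing $\{1,\dots,n\}$-colorings — the second case contributes $\COR\diam(A,E)$; and $A=q\Sigma_q F^{(n-1)}$, since raising a $\{0,\dots,n-1\}$-coloring by one increases $\sigma(c)$ by $\#T$, exactly the effect of $q\Sigma_q$ on the degree-$\#T$ component via \eqref{formule_susp}. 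This gives \eqref{prop_Fn}, whence, using Proposition \ref{prop_gen} as in the derivation of \eqref{auxi_E_2}, the identity \eqref{auxi_Fn_1}: $\COR\diam(F^{(n)},-\pun)=\pun+q\Sigma_q F^{(n-1)}$. For the second identity, analyze $q\Sigma_q F^{(n)}$, the generating series of decreasing $\{1,\dots,n+1\}$-colorings, according to whether the root carries the maximal color $n+1$: if not, $T$ is colored by $\{1,\dots,n\}$ and contributes $q\Sigma_q F^{(n-1)}$; if so, the subtrees above the root are again freely colored by $\{1,\dots,n+1\}$ while the root adds a factor $q^{n+1}$, which by Lemma \ref{lemme_corolle} (the combinatorial reading of $\diam$ inside $\COR$) contributes $q^{n+1}\COR\diam(\pun,q\Sigma_q F^{(n)})$. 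This is \eqref{auxi_Fn_2}.

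With these in hand the conclusion is a one-line substitution: putting $Z=F^{(n)}$ into the left-hand side of \eqref{main_1} and using \eqref{auxi_Fn_1} yields $q\Sigma_q F^{(n)}-\pun-q\Sigma_q F^{(n-1)}$, which by \eqref{auxi_Fn_2} equals $q^{n+1}\COR\diam(\pun,q\Sigma_q F^{(n)})-\pun$, i.e. the right-hand side of \eqref{main_1}; uniqueness then forces $F^{(n)}=\sqx\mid_{x=[n]_q}$. The base case $n=0$ is consistent: the only decreasing coloring by $\{0\}$ is the constant one, so $F^{(0)}=E$, recovering the previous proposition, and with the convention $F^{(-1)}=0$ the two identities degenerate to \eqref{prop_E} and \eqref{auxi_E_1}. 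The step I expect to require the most care is not the formal substitution but the two combinatorial decompositions: one must verify that deleting the color-$0$ vertices, resp. the maximal-color root, interacts with grafting exactly as $\COR\diam(-,-)$ prescribes — a subtree through the root with an arbitrary forest above — and that the monomial weights $q^{\sigma(c)}$, including the isolated $q^{n+1}$ and the shift encoded by $q\Sigma_q$, multiply across the pieces correctly; this is precisely what Lemmas \ref{lemme_ABC} and \ref{lemme_corolle} and the suspension formula \eqref{formule_susp} guarantee.
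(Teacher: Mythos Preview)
Your proposal is correct and follows essentially the same route as the paper: you establish the two identities \eqref{auxi_Fn_1} and \eqref{auxi_Fn_2} by the same combinatorial decompositions (isolating the color-$0$ vertices for the first, distinguishing whether the root carries the maximal color for the second), then substitute into \eqref{main_1} and invoke uniqueness. The added remarks on the base case $n=0$ and on where the care is needed are accurate and do not deviate from the paper's argument.
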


\subsection{Valeur en $x=1/(1-q)$}

On peut passer à la limite des coloriages lorsque $n=\infty$. Ceci
correspond à spécialiser $x$ en $1/(1-q)$. L'équation \eqref{main}
devient alors
\begin{equation}
  \label{main_infini}
  q \Sigma_q Z - \COR \diam (Z, -\pun) = - \pun,
\end{equation}
pour la série $Z = \sqx\mid_{x=1/(1-q)}$.

La solution $Z$ a pour coefficients des
fractions en $q$ ayant des pôles en $q=1$. Le coefficient d'un arbre
$T$ est la fraction définie par la série formelle
\begin{equation*}
 \sum_{c\in \coul(T)} q^{\sigma(c)},
\end{equation*}
la somme portant sur tous les coloriages décroissants de $T$. En
particulier, pour le coefficient de la corolle $\corol_k$, on trouve
l'expression
\begin{equation}
  \sum_{j=1}^{\infty} q^{j-1} [j]_q^k.
\end{equation}

On rappelle l'opérateur $\zeta_q(-k)$ (agissant
sur les séries formelles en $q$ sans terme constant) introduit dans
\cite{chap_zeta}, défini par
\begin{equation*}
  \zeta(-k)(f) = \sum_{j=1}^{\infty} f(q^j) [j]_q^k.
\end{equation*}

Le coefficient de la corolle $\corol_k$ peut s'exprimer comme
\begin{equation}
  \frac{1}{q} \zeta_q(-k)(q) .
\end{equation}

\subsection{Évaluation en $x=1$ puis $q=-1$}

On obtient une série intéressante en spécialisant en $q=-1$ la série
$F^{(1)}$. Notons $\overline{F}$ la série obtenue.

Lorsque $n=1$ et $q=-1$, la relation \eqref{calcul_Fq} devient
\begin{equation}
  \overline{F}_{\bop(T_1,\dots,T_k)}= 1 - \prod_{i=1}^k \overline{F}_{T_i}.
\end{equation}

Cette relation entraîne par récurrence que $\overline{F}_T \in
\{0,1\}$ pour tout $T$.

On dit qu'un arbre $T$ est de type $0$ (resp. $1$) si
$\overline{F}_T=0$ (resp. $1$).

Un arbre est de type $0$ si et seulement si il est de la forme
$\bop(T_1,\dots,T_k) $ avec tous les $T_i$ de type $1$. Un arbre est de
type $1$ si et seulement si il est de la forme $ \bop(T_1,\dots,T_k) $
avec au moins un $T_i$ de type $0$.

Il y a une relation entre cette partition de l'ensemble des arbres
enracinés en deux types et les résultats de \cite{bauer} portant sur
un tri-coloriage canonique des arbres non-enracinés. Les arbres
enracinés de type $0$ sont exactement les arbres enracinés dont la
racine est verte au sens de cet article. On peut en déduire la
description suivante des arbres de type $0$ et $1$.

On appelle \textbf{couverture minimale par sommets} d'un arbre $T$ un
ensemble $C$ de sommets de $T$, tel que chaque arête ait au moins une
des ses extrémités dans $C$, et de cardinal minimal pour cette
propriété.

Alors un arbre est de type $1$ si et seulement si et seulement si il
existe une couverture minimale par sommets contenant la racine, et de
type $0$ si et seulement si aucune couverture minimale par sommets ne
contient la racine. Par exemple, l'arbre de la figure \ref{fig:arbre5}
est de type $1$.

\subsection{Valeur en $x=[n]_q$ pour $n$ négatif}

Lorsque $n$ est un entier négatif, l'équation \eqref{main_1} reste
valable. En remplaçant dans cette équation $q$ par $1/q$ et $n$ par
$-n$, on obtient, pour tout entier $n$ positif, l'équation
\begin{equation}
  \label{main_neg}
  1/q \Sigma_{1/q} Z - \COR \diam (Z, -\pun) = q^{n-1} \COR \diam (\pun, 1/q\Sigma_{1/q} Z ) - \pun,
\end{equation}
pour la série $Z$ obtenue en remplaçant $q$ par $1/q$ dans
$\sqx\mid_{x=[n]_q}$. Pour $n=1$, la seule
solution est identiquement nulle, comme on l'a déjà dit plus haut. On
suppose maintenant que $n\geq 2$.

En posant $Z = (-q) \Sigma_{-q} Y$, on trouve
\begin{equation}
  \label{autre_neg}
  - Y + \COR \diam ( q\Sigma_q Y, \pun) = q^{n-1} \COR \diam (\pun, Y ) - \pun.
\end{equation}

Soit $\leq$ la relation d'ordre partiel sur les sommets de $T$ ayant
la racine pour minimum.

\begin{definition}
  Un $n$-coloriage décroissant strict de $T$ est une application
  strictement décroissante $c$ de $T$ dans l'ensemble $\{0,\dots,n\}$.
\end{definition}

Pour chaque arbre $T$ et un entier $n$, on définit un polynôme
$G_T^{(n)}$ en $q$ par la formule
\begin{equation}
  G_T^{(n)} = \sum_{c\in \coul(T,n,<)} q^{\sigma(c)},
\end{equation}
la somme portant sur les $n$-coloriages décroissants stricts de $T$.

Ces polynômes sont déterminés par la relation
\begin{equation}
  \label{calcul_Gq}
  G^{(n)}_{\bop(T_1,\dots,T_k)}= \sum_{j=0}^{n} q^j \prod_{i=1}^k G^{(j-1)}_{T_i}.
\end{equation}
En effet, si la couleur de la racine est $j$, les sous-arbres $T_i$
sont coloriés par des entiers strictement inférieurs à $j$.

Par exemple, le polynôme $G^{(3)}_T$ pour l'arbre $T$ de la figure
\ref{fig:arbre5} est
\begin{equation}
  q^9 + 3 q^8 + 4 q^7 + 4 q^6 + 2 q^5 + 2 q^4 + q^3.
\end{equation}

Soit $G^{(n)}$ la série définie par
\begin{equation}
  G^{(n)} = \sum_{T} G_T^{(n)} \frac{T}{\aut(T)}.
\end{equation}

On a
\begin{equation}
  \label{prop_G_1}
  G^{(n+1)} - \pun = \COR \diam ( q\Sigma_q G^{(n)}, \pun).
\end{equation}
En effet, le terme de gauche correspond aux coloriages décroissants
stricts par $\{0,\dots,n+1\}$ dont la racine ne porte pas la couleur
$0$. Le terme de droite correspond à la greffe de sommets isolés
portant la couleur $0$ sur un coloriage décroissant strict par
$\{1,\dots,n+1\}$.

Par ailleurs, on a
\begin{equation}
  \label{prop_G_2}
  G^{(n+1)} - G^{(n)} = q^{n+1} \COR \diam ( \pun, G^{(n)}).
\end{equation}
En effet, le terme de gauche correspond aux coloriages décroissants
stricts par $\{0,\dots,n+1\}$ dont la racine porte la couleur $n+1$. Le terme de droite correspond à la greffe sur une racine portant la couleur $n+1$ de coloriages décroissants stricts par $\{0,\dots,n\}$.

On déduit des égalités \eqref{prop_G_1} et \eqref{prop_G_2} que $G^{(n-2)}$
est solution de \eqref{autre_neg}.

Par unicité de la solution, on obtient
\begin{theorem}
  \label{valeur_n_negatif}
  La valeur de $ \sqx$ en $x=[-n]_q$, évaluée en $q=1/q$, est la série
  $-q \Sigma_{-q} G^{(n-2)}$.
\end{theorem}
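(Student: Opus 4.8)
The plan is to recognize the series in question among the solutions of a functional equation and to conclude by uniqueness; all the needed identities have already been set up before the statement, so the proof is mostly a matter of assembling them.

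First I would use that \eqref{main_1} is valid for every integer index, in particular a negative one, so that $\sqx\mid_{x=[-n]_q}$ is its solution for that index. Applying the ring automorphism $q\mapsto 1/q$ of $\QQ(q)$ — which acts coefficientwise on each homogeneous component, fixes $\COR$, $\diam$ and $\pun$, and sends $q\Sigma_q$ to $\tfrac1q\Sigma_{1/q}$ and $q^{-n+1}$ to $q^{n-1}$ — shows that the series obtained from $\sqx\mid_{x=[-n]_q}$ by this substitution is a solution $Z$ of \eqref{main_neg}. The value $n=1$ corresponds to $x=[-1]_q=-1/q$, for which the solution is identically zero as already noted, so from now on I assume $n\geq 2$.

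Next I would carry out the change of unknown $Z=-q\,\Sigma_{-q}Y$ in \eqref{main_neg}. Combining $\Sigma_\alpha\Sigma_\beta=\Sigma_{\alpha\beta}$, the bilinearity of $\diam$ in its first two arguments, the triviality of $\Sigma_\alpha$ on the degree-one part $\pun$, and the suspension formula \eqref{formule_susp} — applied, together with an application of $\Sigma_{-1}$ to the whole equation, to absorb the leftover suspensions — rewrites \eqref{main_neg} as \eqref{autre_neg}. I expect this step, in particular keeping track of the signs entering through the $C$-slot of $\diam$ where $\diam$ is \emph{not} linear, to be the main obstacle; everything else is routine bookkeeping.

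It then remains to check that $Y=G^{(n-2)}$ solves \eqref{autre_neg}, and to invoke uniqueness. Substituting $n-2$ for $n$ in \eqref{prop_G_1} turns $\COR\diam(q\Sigma_q G^{(n-2)},\pun)$ into $G^{(n-1)}-\pun$, so the left-hand side of \eqref{autre_neg} becomes $G^{(n-1)}-G^{(n-2)}-\pun$; substituting $n-2$ for $n$ in \eqref{prop_G_2} turns $G^{(n-1)}-G^{(n-2)}$ into $q^{n-1}\COR\diam(\pun,G^{(n-2)})$, so this is exactly $q^{n-1}\COR\diam(\pun,G^{(n-2)})-\pun$, the right-hand side (the hypothesis $n\geq 2$ being precisely what makes $G^{(n-2)}$ defined). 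Finally, the recursive argument following \eqref{recu_main} carries over verbatim to \eqref{autre_neg}: its degree-$n$ component expresses the degree-$n$ part of $Y$ in terms of strictly lower degrees, the leading coefficient $q^{n}-1$ being invertible in $\QQ(q)$, so the solution is unique. Hence $Y=G^{(n-2)}$, that is $Z=-q\,\Sigma_{-q}G^{(n-2)}$, which is the asserted value.
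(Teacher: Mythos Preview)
Your proposal is correct and follows exactly the approach taken in the paper: derive \eqref{main_neg} from \eqref{main_1} via $q\mapsto 1/q$, change variables $Z=-q\,\Sigma_{-q}Y$ to reach \eqref{autre_neg}, verify that $G^{(n-2)}$ satisfies \eqref{autre_neg} by combining \eqref{prop_G_1} and \eqref{prop_G_2} with the index shifted down by two, and conclude by uniqueness. Your write-up is in fact more explicit than the paper's on the verification and uniqueness steps; the only cosmetic issue is that in your final paragraph you reuse the letter $n$ for the homogeneous degree, which clashes with the parameter $n$ of the statement.
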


On en déduit le corollaire suivant.

\begin{proposition}
  \label{facteurs_connus}
  Soit $H$ la hauteur de $T$. Le numérateur de $\sqx_T$ est divisible
  par le produit
  \begin{equation}
    \prod_{i=1}^{H} \left([i]_q+q^i x\right).
  \end{equation}
\end{proposition}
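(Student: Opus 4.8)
The plan is to produce $H$ distinct specializations of $x$ at which every coefficient $\sqx_T$ vanishes, namely the negative $q$-integers $x=[-1]_q,[-2]_q,\dots,[-H]_q$, and then to turn this vanishing into the stated divisibility of the numerator. The bridge is the identity $[-i]_q=-[i]_q/q^i$, which says that the factor $[i]_q+q^i x$ vanishes precisely at $x=[-i]_q$, so that $\prod_{i=1}^{H}([i]_q+q^i x)=q^{H(H+1)/2}\prod_{i=1}^{H}(x-[-i]_q)$. It therefore suffices to show that the polynomial $\sqx_T\in\QQ(q)[x]$ is divisible by $\prod_{i=1}^{H}(x-[-i]_q)$ and to transfer this to the numerator.

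For $2\le n\le H$ I would invoke Theorem \ref{valeur_n_negatif}, which gives, after the substitution $q\mapsto 1/q$, the equality $\sqx_T|_{x=[-n]_q}=(-q)^{\#T}G^{(n-2)}_T$, where $G^{(n-2)}_T$ is the generating polynomial of the strictly decreasing colorings of $T$ by $\{0,\dots,n-2\}$. Such a coloring needs at least $H$ distinct colors along a chain of $H$ vertices realizing the height, while only $n-1\le H-1$ are available, so $G^{(n-2)}_T=0$; since $q\mapsto 1/q$ is an automorphism of $\QQ(q)$, it follows that $\sqx_T|_{x=[-n]_q}=0$. The remaining value $n=1$, that is $x=[-1]_q=-1/q$, is handled separately: the whole series $\sqx$ specialized there is identically zero. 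The $[-n]_q$ for $1\le n\le H$ are pairwise distinct in $\QQ(q)$, and $\sqx_T$ is a nonzero element of $\QQ(q)[x]$ (its value at $x=0$ is the coefficient $E_T=1$), hence $\prod_{i=1}^{H}(x-[-i]_q)$ divides $\sqx_T$ in $\QQ(q)[x]$.

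It remains to descend from $\QQ(q)[x]$ to the numerator, which I expect to be the only slightly delicate point. Write $\sqx_T=N_T/D_T$ in lowest terms with $D_T$ a product of cyclotomic polynomials in $q$; since in any other presentation the numerator is a $\QQ[q]$-multiple of $N_T$, this case suffices. The polynomial $P:=\prod_{i=1}^{H}([i]_q+q^i x)$ is primitive in $\QQ[q][x]$, because each factor $[i]_q+q^i x$ is primitive — its $\QQ[q]$-content is $\gcd(q^i,[i]_q)=1$, as $[i]_q$ has constant term $1$ — and a product of primitive polynomials is primitive by Gauss's lemma. We have shown that $P$ divides $\sqx_T=N_T/D_T$, hence divides $N_T$, in $\QQ(q)[x]$; since $P$ is primitive, the standard consequence of Gauss's lemma yields $P\mid N_T$ in $\QQ[q][x]$, which is the assertion. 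The main obstacle is exactly this primitivity bookkeeping: one must check that the powers $q^i$ cannot combine with the polynomials $[j]_q$ or with the cyclotomic denominator $D_T$ to absorb part of a factor $[i]_q+q^i x$.
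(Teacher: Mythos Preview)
Your argument is correct and follows essentially the same route as the paper: both deduce the vanishing of $\sqx_T$ at $x=[-i]_q$ for $1\le i\le H$ from the absence of strictly decreasing colorings with fewer than $H$ colors (via Theorem~\ref{valeur_n_negatif}, plus the separate observation that the series is identically zero at $x=[-1]_q$). You add a careful Gauss-lemma step to descend from divisibility in $\QQ(q)[x]$ to divisibility of the numerator in $\QQ[q][x]$, a point the paper passes over in silence.
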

\begin{proof}
  Soit $i$ un entier entre $1$ et $H$. Comme $i-1$ est strictement
  inférieur à la hauteur de l'arbre, il n'existe aucun coloriage
  décroissant strict par $\{0,\dots,i-2\}$. Par conséquent, le
  coefficient $\sqx_T$ s'annule en $x=[-i]_q$, ce qui entraîne le résultat.
\end{proof}

\subsection{Limite en $q=1$}

\begin{proposition}
  Pour tout $T$, la fraction $\sqx_T$ n'a pas de pôle en $q=1$.
\end{proposition}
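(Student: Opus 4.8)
The plan is to recover $\sqx_T$ from its evaluations at $q$-integers by Lagrange interpolation in the variable $x$, and to observe that those evaluations are honest polynomials in $q$, hence regular at $q=1$.

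Write $N = \# T$. By the Proposition above stating that $\sqx_T$ is a polynomial in $x$ of degree at most $\# T$, the element $\sqx_T \in \QQ(q)[x]$ is determined by its values at the $N+1$ points $[0]_q, [1]_q, \dots, [N]_q$. First I would record that these points are pairwise distinct in $\QQ(q)$ and, crucially, that the differences do not degenerate at $q=1$: for $0 \le i < j \le N$ one has $[j]_q - [i]_q = q^i[j-i]_q$, a nonzero polynomial in $q$ whose value at $q=1$ is $j - i \neq 0$.

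Next I would invoke Theorem \ref{valeur_n_positif}: for $0 \le n \le N$ the evaluation $\sqx_T\mid_{x=[n]_q}$ equals $F^{(n)}_T = \sum_{c} q^{\sigma(c)}$, a polynomial in $q$, and in particular it has no pole at $q=1$. The Lagrange interpolation formula then reads
\begin{equation*}
  \sqx_T = \sum_{n=0}^{N} \frac{F^{(n)}_T}{\displaystyle\prod_{\substack{0 \le m \le N \\ m \neq n}}\left([n]_q - [m]_q\right)} \ \prod_{\substack{0 \le m \le N \\ m \neq n}}\left(x - [m]_q\right).
\end{equation*}
Each denominator $\prod_{m \neq n}\left([n]_q - [m]_q\right)$ is a polynomial in $q$ whose value at $q=1$ is $\prod_{m \neq n}(n-m) \neq 0$. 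Hence every term on the right-hand side, expanded as a polynomial in $x$, has coefficients in $\QQ(q)$ that are regular at $q=1$; therefore so does $\sqx_T$, and in particular $\sqx_T$ has no pole at $q=1$.

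I do not expect a genuine obstacle: the whole argument rests on the elementary identity $[k]_{q=1} = k$, which simultaneously guarantees that the interpolation nodes stay distinct and that solving the interpolation linear system introduces no spurious factor of $q-1$ in the denominator. The only mild care needed is to check that the number of available evaluation points ($N+1$, namely $x = [0]_q, \dots, [N]_q$, all covered by Theorem \ref{valeur_n_positif}) matches the bound $\deg_x \sqx_T \le N$.
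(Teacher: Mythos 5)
Your argument is exactly the one in the paper: interpolate $\sqx_T$ in $x$ at the $q$-integers $[0]_q,\dots,[N]_q$, use Theorem \ref{valeur_n_positif} to see the values are polynomials in $q$, and check that the Lagrange denominators $[n]_q-[m]_q=q^m[n-m]_q$ do not vanish at $q=1$. You have simply written out the interpolation details that the paper leaves implicit; the proof is correct.
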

\begin{proof}
  Les évaluations en les $q$-entiers $[n]_q$ pour $n\in \ZZ_{\geq 0}$ sont des
  polynômes en $q$. Comme le degré de $\sqx_T$ par rapport à la
  variable $x$ est borné par la taille de $T$, on peut retrouver
  $\sqx_T$ par interpolation. Il résulte de la formule d'interpolation
  que $\sqx_T$ est bien défini en $q=1$.
\end{proof}

On peut déduire de \eqref{recu_main} que la limite $Z$ de $\sqx$ en $q=1$
vérifie
\begin{equation}
  \label{q_egal_1}
  0 = \left(\COR \diam (Z, -\pun) - Z\right) + \COR \diam (\pun,Z ) - \pun.
\end{equation}

\begin{proposition}
  La condition initiale $Z_1=1+x$ et l'équation \eqref{q_egal_1}
  caractérisent la série $Z$.
\end{proposition}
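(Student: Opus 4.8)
The plan is to show that these two conditions pin down every coefficient of $Z$, via a recursion extracted from \eqref{q_egal_1}; the subtlety is that the recursion is ``shifted by one''. First I would expand \eqref{q_egal_1} coefficient by coefficient. Using Lemma~\ref{lemme_corolle} for $\COR\diam(\pun,Z)$ and the combinatorial description of Lemma~\ref{lemme_ABC} for $\COR\diam(Z,-\pun)$, the coefficient of a tree $T=\bop(T_1,\dots,T_k)$ in \eqref{q_egal_1} reads
\[
  0 \;=\; \sum_{\emptyset\neq S\subseteq \mathrm{Leaves}(T)} (-1)^{|S|}\,Z_{T\setminus S}
       \;+\; \prod_{i=1}^{k} Z_{T_i}\;-\;\varepsilon_T ,
\]
where $T\setminus S$ denotes $T$ with the leaves in $S$ deleted, the empty product is $1$, and $\varepsilon_T=1$ if $T=\pun$ and $0$ otherwise. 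The point is that this identity involves only coefficients $Z_{T'}$ with $\#T'\le \#T-1$ (one has $\#(T\setminus S)\le \#T-1$, and $\sum_i\#T_i=\#T-1$), and that the coefficients of size exactly $\#T-1$ occurring in it are the $Z_{T\setminus\{v\}}$ for $v$ a leaf of $T$, together with $Z_{T_1}$ when $k=1$. So \eqref{q_egal_1} does not give the degree-$n$ part of $Z$ from lower degrees; instead, the relations indexed by trees of size $n+1$ should determine the coefficients of all trees of size $n$ from those of size $<n$.

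I would then argue by induction on $n$. The coefficients of size $1$ are the prescribed value $Z_1=1+x$. Assume all $Z_{T''}$ with $\#T''\le n-1$ are known. To recover $Z_{T'}$ for a tree $T'=\bop(U_1,\dots,U_d)$ of size $n$, I would apply the identity above to the tree $\bop(\pun,U_1,\dots,U_d)$, obtained from $T'$ by grafting one extra leaf at its root. There the product term equals $Z_\pun\prod_i Z_{U_i}$, a polynomial in coefficients of size $<n$, hence known; among the coefficients of size $n$, the ones that appear are $Z_{T'}$ itself, with coefficient $-(1+m)$ where $m\ge 0$ is the number of leaves attached directly to the root of $T'$ (deleting the new leaf, or any of those $m$ leaves, returns $T'$), together with coefficients $Z_{T''}$ where $T''$ is obtained by deleting a leaf lying in a branch $U_i$ of size $\ge 2$ — and any such $T''$ has one more child at the root than $T'$ does. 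Running a secondary, downward induction on the number of children of the root therefore solves successively for every $Z_{T'}$ with $\#T'=n$: the top case is the corolle $\corol_{n-1}$, handled by the relation at $\corol_{n}$ (where no size-$n$ tree of larger root-degree intervenes), while the trees with a single child at the root are reached last, via relations at trees whose root has two children.

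Combining the two inductions shows that any tree-series with $Z_1=1+x$ satisfying \eqref{q_egal_1} has all of its coefficients forced; since such a series exists — the limit of $\sqx$ at $q=1$, whose degree-$1$ part is $(1+x)\pun$ — it is unique, which is the assertion. The only real work is the bookkeeping in the inner step: checking that, at $\bop(\pun,U_1,\dots,U_d)$, deleting a leaf from a branch of size $\ge 2$ strictly raises the number of root-children while deleting one of the remaining root-leaves returns $T'$, so that $Z_{T'}$ comes out with a nonzero coefficient and every other size-$n$ coefficient on the right-hand side sits at an earlier stage of the (size, then decreasing root-degree) ordering; one must also carry along the automorphism factors implicit in the convention \eqref{deva}. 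I expect this to be the main, but essentially routine, obstacle.
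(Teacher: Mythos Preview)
Your argument is correct, but it proceeds quite differently from the paper's. The paper rewrites \eqref{q_egal_1} as
\[
  [Z,\pun] \;=\; \bigl(\COR\diam(Z,-\pun)-Z+Z\pl\pun\bigr)\;+\;\bigl(\COR\diam(\pun,Z)-\pun-\pun\pl Z\bigr),
\]
and observes that in degree $n+1$ the right-hand side depends only on the components $Z_k$ with $k\le n-1$ (the degree-$n$ contributions on each side cancel because the two bracketed expressions have no linear part in~$Z$ of top degree). The recursion then closes by invoking the injectivity of $x\mapsto[x,\pun]$ in degree~$\ge 2$, which the paper deduces from the fact that free pre-Lie algebras are free as Lie algebras \cite{foissy,chap_libre}.

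Your approach avoids this structural input entirely: you expand the equation tree by tree and run a double induction, first on the size and then, within each size, by \emph{decreasing} root-degree, using the relation at $\bop(\pun,T')$ to isolate $Z_{T'}$. This is more elementary and self-contained, at the cost of more explicit bookkeeping (the automorphism factors you flag, and the verification that removing a non-root leaf of $\bop(\pun,U_1,\dots,U_d)$ always raises the root-degree). The paper's route is shorter and conceptually cleaner, but yours has the merit of not importing the Lie-freeness theorem; in effect, your combinatorial ordering on trees (size, then decreasing root-degree) is a hands-on substitute for the injectivity of $[\,\cdot\,,\pun]$.
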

\begin{proof}
  L'équation \eqref{q_egal_1} peut s'écrire sous la forme
  \begin{equation*}
    [Z,\pun] = \left(\COR \diam (Z, -\pun) - Z + Z \pl \pun \right) + \left(\COR \diam (\pun,Z ) - \pun - \pun \pl Z\right),
  \end{equation*}
  où le membre de gauche fait intervenir le crochet de Lie associé au
  produit pré-Lie $\pl$. Soit $n\geq 2$. La composante homogène de degré
  $n+1$ de cette équation permet d'exprimer la composante de degré
  $n+1$ de $[Z,\pun]$ en fonction des composantes $Z_k$ pour $k \leq
  n-1$. Il suffit alors d'utiliser que l'application $x\mapsto
  [x,\pun]$ est injective lorsque le degré de $x$ est au moins $2$, ce
  qui résulte du fait que les algèbres pré-Lie libres sont libres en
  tant qu'algèbres de Lie \cite{foissy,chap_libre}.
\end{proof}

\subsection{Valeur limite en $x=-1/q$}

On rappelle la série $\Omega_q$ introduite et étudiée dans l'article
\cite{chap_omega}. Elle est définie par l'équation
\begin{equation}
  \COR \diam (q\Sigma_q \Omega_q,\pun) -\Omega_q = \pun \pl \Omega_q +(q-1)
\pun.
\end{equation}

On introduit une variante $\baro_q$ définie par
\begin{equation}
  \label{def_baroq}
  \baro_q = \Sigma_{(-1/q)} \Omega_{1/q}.
\end{equation}

On montre aisément que cette variante vérifie l'équation
\begin{equation}
  \label{defi_baro}
  q\Sigma_q \baro_q - \COR \diam (\baro_q, -\pun) = (q-1) \pun + q \,\pun \pl \Sigma_q \baro_q,
\end{equation}
qui la caractérise.

Par la proposition \ref{facteurs_connus}, tous les coefficients de
$\sqx$ sont divisibles par $1+q x$.

On considère ici la limite de $\frac{1}{1+q x}\sqx$ lorsque $x=-1/q$.

\begin{theorem}
  \label{valeur_speciale}
  La limite de $\frac{1}{1+q x}\sqx$ en $x=-1/q$ est la série $\baro_q$.
\end{theorem}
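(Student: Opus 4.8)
The plan is to write $\sqx=(1+qx)\,W$ for a tree series $W$, substitute into the functional equation \eqref{main}, divide by the scalar $1+qx$, and let $x\to-1/q$; the resulting series should satisfy the equation \eqref{defi_baro} that characterizes $\baro_q$, whence the claim. By Proposition \ref{facteurs_connus} applied with $i=1$ (every tree has height at least $1$), the numerator of each $\sqx_T$ is divisible by $[1]_q+qx=1+qx$, so $W:=\frac{1}{1+qx}\sqx$ is well defined, with coefficients polynomial in $x$ over $\QQ(q)$ up to cyclotomic denominators in $q$. In particular $W$ is regular at $x=-1/q$, and $Z:=W\mid_{x=-1/q}$ is precisely the limit appearing in the statement. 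It remains to prove $Z=\baro_q$.

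Substituting $\sqx=(1+qx)W$ into \eqref{main} and using that $\diam$ is linear in its first two arguments and $\Sigma_q$ is linear over scalars, the left-hand side becomes $(1+qx)\bigl(q\Sigma_q W-\COR \diam (W,-\pun)\bigr)$, while $q\Sigma_q\sqx=(1+qx)\,q\Sigma_q W$. Introduce the local parameter $u:=1+qx$, so that $x=(u-1)/q$ and $x\to-1/q$ corresponds to $u\to 0$; a one-line computation gives $q\bigl(1+(q-1)x\bigr)=1+(q-1)u$. Dividing \eqref{main} by $u$ then yields
\begin{equation*}
  q\Sigma_q W-\COR \diam (W,-\pun)=\frac1u\Bigl[\bigl(1+(q-1)u\bigr)\,\COR \diam \bigl(\pun,\,u\,q\Sigma_q W\bigr)-\pun\Bigr].
\end{equation*}
The left-hand side has coefficients regular at $u=0$ (each is a finite $\QQ(q)$-linear combination of coefficients of $W$, since $\COR \diam (\cdot,-\pun)$ is linear in its first argument and lowers degree), so both sides may be evaluated at $u=0$.

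The key step is to evaluate the right-hand side at $u=0$. Writing $B:=q\Sigma_q W$, Lemma \ref{lemme_corolle} gives that the coefficient of $\bop(T_1,\dots,T_k)$ in $\COR \diam (\pun,uB)$ is $\prod_{i=1}^{k}(uB)_{T_i}=u^{k}\prod_{i=1}^{k}B_{T_i}$; hence $\COR \diam (\pun,uB)=\pun+u\,(\pun\pl B)+O(u^2)$, the linear term being exactly the pre-Lie product $\pun\pl B$ (the $k=1$ contributions, i.e.\ grafting $B$ onto the one-vertex tree) and the $k\geq 2$ contributions carrying a factor $u^{k}$. Therefore
\begin{equation*}
  \frac1u\Bigl[\bigl(1+(q-1)u\bigr)\COR \diam (\pun,uB)-\pun\Bigr]=(q-1)\pun+\pun\pl B+O(u),
\end{equation*}
so at $u=0$ the right-hand side equals $(q-1)\pun+\pun\pl(q\Sigma_q Z)=(q-1)\pun+q\,\pun\pl\Sigma_q Z$, using bilinearity of $\pl$ and that $\Sigma_q$ commutes with specialization at $x=-1/q$. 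Thus $Z$ satisfies $q\Sigma_q Z-\COR \diam (Z,-\pun)=(q-1)\pun+q\,\pun\pl\Sigma_q Z$, which is exactly equation \eqref{defi_baro}; by the uniqueness asserted there, $Z=\baro_q$.

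The only genuinely delicate point is the asymptotics of $\COR \diam (\pun,uB)$ as $u\to0$: because $\diam$ is not linear in its third argument, $\COR \diam (\pun,\cdot)$ is not linear, and one must use Lemma \ref{lemme_corolle} to see that, after dividing by $u$, only the trees whose root has a single child survive, producing precisely the term $\pun\pl(q\Sigma_q Z)$. Everything else is the bookkeeping needed to pull the scalar $1+qx$ out of the two remaining terms of \eqref{main}.
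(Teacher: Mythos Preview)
Your proof is correct and follows exactly the route the paper takes: pass to the limit $x\to-1/q$ in the functional equation \eqref{main} (after dividing by $1+qx$) and recognize the result as the characterizing equation \eqref{defi_baro} for $\baro_q$. The paper's own proof is a one-line assertion of this fact; you have simply made the computation explicit, in particular the expansion $\COR\diam(\pun,uB)=\pun+u\,(\pun\pl B)+O(u^{2})$ via Lemma~\ref{lemme_corolle}, which is the only nontrivial step. One small quibble: the phrase ``$\COR\diam(\cdot,-\pun)$ \dots\ lowers degree'' is not quite right---the operation does not lower degree, but the degree-$n$ component of $\COR\diam(W,-\pun)$ depends only on $W_k$ for $k\leq n$, which is what you need for regularity at $u=0$.
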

\begin{proof}
  On trouve exactement comme limite pour l'équation \eqref{main}
  l'équation \eqref{defi_baro} vérifiée par $\baro_q$.
\end{proof}

Par exemple, on trouve ainsi que le coefficient de l'arbre de la
figure \ref{fig:arbre5} dans la série $\baro_q$ est la fraction
\begin{equation}
  \frac{1 + q - q^3}{\Phi_2\Phi_3\Phi_4\Phi_5}.
\end{equation}

D'après \cite{chap_omega}, on sait que les dénominateurs des
coefficients de la série $\Omega_q$ sont des produits de polynômes
cyclotomiques sans multiplicité. C'est donc vrai aussi pour la
variante $\baro_q$. Par contre, c'est faux pour la série $\sqx$, par
exemple pour l'arbre de la figure \ref{fig:arbre5}.

Il se produit donc nécessairement des simplifications lors de la
limite en $x=-1/q$. Certaines de ces simplifications font intervenir
les facteurs du numérateur de $\sqx_T$ décrits dans la proposition
\ref{facteurs_connus}. Mais ceci ne suffit pas à expliquer toutes les
simplifications nécessaires, comme on le voit en considérant l'exemple
de $\bop(\corol_2,\pun,\pun)$.


\subsection{Valeur en $x=\infty$}

Par valeur en $x=\infty$, on entend la série obtenue en ne gardant que
le terme homogène de degré $n$ en $x$ dans chaque composante $\sqx_n$
de $\sqx$.

Par un passage à la limite convenable dans \eqref{main}, on obtient
pour cette valeur l'équation
\begin{equation}
  q \Sigma_q Z - Z = q (q-1) \COR \diam (\pun, q\Sigma_q Z),
\end{equation}
pour la série $Z = \sqx\mid_{x=\infty}$. Le terme de degré $1$ de $Z$ est $q \,\pun$.

On peut écrire ceci sous la forme
\begin{equation}
  \label{recu_infty}
  \frac{  q \Sigma_q Z - Z}{q-1} = q \COR \diam (\pun, q\Sigma_q Z).
\end{equation}

\begin{definition}
  La $q$-factorielle d'un arbre $T$ est définie comme suit :
  \begin{equation}
    [T]!_q = q^{-\sum_{v \in T}\# T_v}\prod_{v \in T} [\# T_v]_q
  \end{equation}
  où $T_v$ est le sous arbre de $T$ associé au sommet $v$.
\end{definition}

En $q=1$, on retrouve la factorielle d'un arbre usuelle, définie par
\begin{equation}
  T! = \prod_{v \in T} \# T_v.
\end{equation}

Par exemple, la $q$-factorielle de l'arbre $T$ de la figure \ref{fig:arbre5} est
\begin{equation}
  q^{-1-1-1-3-5} [1]_q [1]_q [1]_q [3]_q [5]_q = q^{-11} [3]_q [5]_q.
\end{equation}

\begin{proposition}
  La valeur de $\sqx$ en $x=\infty$ est la série
  \begin{equation}
    \sum_{T} \frac{1}{[T]!_q} \frac{T}{\aut(T)}.
  \end{equation}
\end{proposition}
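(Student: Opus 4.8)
The plan is to show that the series $W = \sum_{T} \frac{1}{[T]!_q} \frac{T}{\aut(T)}$ satisfies the limiting equation \eqref{recu_infty}, and then conclude by the uniqueness part of the recursive argument established for \eqref{main} (which was noted to persist under any specialization of $x$, including the limit $x=\infty$). First I would rewrite \eqref{recu_infty} as $q\Sigma_q W - W = q(q-1) \COR \diam(\pun, q\Sigma_q W)$ and observe that, by the suspension formula \eqref{formule_susp} applied to $\COR \diam(\pun, -)$, the right-hand side is $q(q-1)\,\Sigma_q\big(\COR \diam(\pun, q W)\big)$ up to bookkeeping of the $q$-powers; more carefully, one should express everything in terms of coefficients of a fixed arbre $T = \bop(T_1,\dots,T_k)$ and use Lemma \ref{lemme_corolle} together with \eqref{formule_susp} to turn $\COR \diam(\pun, q\Sigma_q W)$ into $\sum_i$-type products.

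The combinatorial heart is a recursion for $[T]!_q$. Writing $T = \bop(T_1,\dots,T_k)$ with $\#T = n = 1 + \sum_i \#T_i$, the root subtree $T_v$ for $v$ the root is all of $T$, so $[T]!_q = q^{-n}[n]_q \prod_{i=1}^{k} \big(q^{\sum_{v\in T_i}\#T_v}[T_i]!_q\big)\cdot q^{-(\text{something})}$; unwinding the exponent one gets $[T]!_q = q^{-n}\,[n]_q \prod_{i=1}^k [T_i]!_q$, so $\frac{1}{[T]!_q} = \frac{q^{n}}{[n]_q}\prod_{i=1}^k \frac{1}{[T_i]!_q}$. This is exactly the shape needed: the factor $\frac{q^n}{[n]_q} = \frac{q^n(q-1)}{q^n-1}$ matches the operator $\frac{q\Sigma_q(-) - (-)}{q-1}$, which acts on the degree-$n$ coefficient of a series as multiplication by $\frac{q^n - 1}{q-1} \cdot q^{?}$ — here one must be careful with the precise power of $q$ coming from $\Sigma_q$ acting on a degree-$n$ component versus the two insertions of $q\Sigma_q$ on the right-hand side.

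So the concrete verification I would carry out: fix $T=\bop(T_1,\dots,T_k)$ of degree $n$; compute the coefficient of $T$ on the left of \eqref{recu_infty}, which is $\frac{q^n - 1}{q - 1}\,W_T = [n]_q\,W_T = [n]_q \cdot \frac{1}{[T]!_q}$; compute the coefficient of $T$ on the right, which by Lemma \ref{lemme_corolle} applied to $\COR \diam(\pun, q\Sigma_q W)$ and by \eqref{formule_susp} equals $q \cdot \prod_{i=1}^k \big((q\Sigma_q W)\text{-coefficient of }T_i\big)$ with the appropriate powers of $q$, i.e.\ $q \cdot q^{n-1} \prod_i W_{T_i} = q^n \prod_i \frac{1}{[T_i]!_q}$; and check the two sides agree using the recursion $\frac{1}{[T]!_q} = \frac{q^n}{[n]_q}\prod_i \frac{1}{[T_i]!_q}$ from the previous paragraph. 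One also checks the degree-$1$ term: $W_{\pun} = 1/[\pun]!_q = q^{-1}[1]_q = 1$, whereas the stated degree-$1$ term of $Z=\sqx\mid_{x=\infty}$ is $q\,\pun$; this apparent discrepancy must be reconciled — presumably the limit/normalization in ``valeur en $x=\infty$'' carries an overall scaling or the leading coefficient bookkeeping absorbs it, and I would check this against the explicit example $[T]!_q = q^{-11}[3]_q[5]_q$ for the tree of Figure \ref{fig:arbre5}, comparing with the top $x$-degree part of $\sqx_T$ read off from the displayed numerator $(q^9 + 2q^8 + \cdots)x^2\cdots$ divided by $[2]_q[3]_q[4]_q[5]_q$.

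The main obstacle I anticipate is purely the exponent bookkeeping: tracking how the iterated suspensions $\Sigma_q$ and the prefactors $q$ in both \eqref{recu_infty} and the definition of $[T]!_q$ (which itself carries the global factor $q^{-\sum_v \#T_v}$) interact, so that the recursion for $1/[T]!_q$ lines up precisely with the degree-graded action of $\frac{q\Sigma_q - \mathrm{id}}{q-1}$ against $q\,\COR\diam(\pun,-)\circ q\Sigma_q$. Once the exponent of $q$ is shown to match on both sides for every $\bop(T_1,\dots,T_k)$ — equivalently, once one verifies the identity $-\sum_{v\in T}\#T_v = -n + \sum_{i}\big(-\sum_{v\in T_i}\#T_v\big)$, which is immediate since the root contributes $\#T_v = n$ — the rest is the uniqueness argument already in hand, and there is nothing further to prove.
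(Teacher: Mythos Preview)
Your approach is essentially identical to the paper's: reduce \eqref{recu_infty} to a coefficient-level recursion via Lemma~\ref{lemme_corolle}, obtain $[n]_q\,Z_T = q\prod_i q^{\#T_i} Z_{T_i} = q^n\prod_i Z_{T_i}$ for $T=\bop(T_1,\dots,T_k)$, and observe that this is exactly the recursion $\frac{1}{[T]!_q}=\frac{q^n}{[n]_q}\prod_i\frac{1}{[T_i]!_q}$ satisfied by the inverse $q$-tree-factorials.

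The only flaw is the arithmetic slip in your degree-$1$ check, which creates a phantom discrepancy. You wrote $W_{\pun}=1/[\pun]!_q=q^{-1}[1]_q=1$, but $q^{-1}[1]_q$ is the value of $[\pun]!_q$, not its inverse; since $[1]_q=1$ one has $[\pun]!_q=q^{-1}$ and hence $W_{\pun}=q$, matching the paper's $Z_1=q\,\pun$ exactly. No rescaling or extra normalization is needed, and your worries about exponent bookkeeping are already resolved by your own (correct) computation $q\cdot q^{n-1}\prod_i W_{T_i}=q^n\prod_i\frac{1}{[T_i]!_q}$.
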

\begin{proof}
  L'énoncé est vrai en degré $1$ par inspection. La composante
  homogène de \eqref{recu_infty} de degré $n$ s'écrit
  \begin{equation}
    [n]_q Z_n = q \COR \diam (\pun, q\Sigma_q Z),
  \end{equation}
  en ne gardant à droite que la partie de degré $n$.

  Par le lemme \ref{lemme_corolle}, ceci est équivalent à la relation
  suivante entre les coefficients
  \begin{equation}
    [\#T]_q Z_T = q \prod_{i=1}^{k} q^{\# T_i} Z_{T_i},
  \end{equation}
  lorsque $T= \bop(T_1,\dots,T_k)$. Cette relation est exactement la
  définition des inverses des $q$-factorielles des arbres.
\end{proof}

\begin{remark}
  En particulier, le degré de $\sqx_T$ est toujours exactement $\# T$.
\end{remark}

\section{Propriétés ombrales}

\label{autres}

Cette section est consacrée à la description d'une relation, distincte
de celle du théorème \ref{valeur_speciale}, entre les séries
$\sqx$ et $\baro_q$, par le biais de certains $q$-analogues des
nombres de Bernoulli.

\subsection{Opérateur de Hahn et opérateur $\bop$}

On introduit l'opérateur $\Delta$ :
\begin{equation}
  \Delta(f) = \frac{f(1+q x)-f(x)}{1+ q x -x},
\end{equation}
agissant sur les fonctions de $x$ à coefficients dans $\QQ(q)$. Il est
$\QQ(q)$-linéaire.

Cet opérateur, qui est une forme de $q$-dérivation, a notamment été
considéré par Hahn \cite{hahn}.

Soit $f$ un polynôme en $x$ à coefficients dans $\QQ(q)$ et notons
$f_n=f([n]_q)$. Alors
\begin{equation}
  \label{delta_val}
  \Delta(f)([n]_q) = \frac{f_{n+1}-f_{n}}{q^n}.
\end{equation}

\begin{proposition}
  \label{action_delta}
  Si $T=\bop(T_1,\dots,T_k)$, on a
  \begin{equation}
    \Delta (\sqx_T) = q \prod_{i=1}^{k} \sqx_{T_i}(1+q x).
  \end{equation}
\end{proposition}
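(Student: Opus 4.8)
The plan is to verify the identity on the infinitely many special values $x=[n]_q$, $n\geq 0$, and then upgrade it to an identity of polynomials in $x$. The starting point is the arithmetic fact $[n+1]_q = 1+q[n]_q$, which together with \eqref{delta_val} says that evaluating $\Delta(\sqx_T)$ at $x=[n]_q$ produces the divided difference $\big(\sqx_T([n+1]_q)-\sqx_T([n]_q)\big)/q^n$, and that evaluating $\sqx_{T_i}(1+qx)$ at $x=[n]_q$ produces exactly $\sqx_{T_i}([n+1]_q)$.

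Next I would feed in Theorem \ref{valeur_n_positif}, which identifies $\sqx_T([m]_q)$ with the coloring polynomial $F^{(m)}_T$. For $T=\bop(T_1,\dots,T_k)$ the recursion \eqref{calcul_Fq} writes $F^{(m)}_T = \sum_{j=0}^m q^j\prod_i F^{(j)}_{T_i}$, so the numerator $F^{(n+1)}_T-F^{(n)}_T$ collapses to the single term $q^{n+1}\prod_i F^{(n+1)}_{T_i}$. Dividing by $q^n$ and translating back through Theorem \ref{valeur_n_positif} gives
\begin{equation*}
  \Delta(\sqx_T)\big([n]_q\big) = q\prod_{i=1}^{k}\sqx_{T_i}\big([n+1]_q\big) = \Big(q\prod_{i=1}^{k}\sqx_{T_i}(1+qx)\Big)\Big|_{x=[n]_q}
\end{equation*}
for every integer $n\geq 0$.

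To conclude I would observe that both sides of the asserted formula are polynomials in $x$ with coefficients in $\QQ(q)$: this is clear for the right-hand side, and for $\Delta(\sqx_T)$ it holds because the numerator $\sqx_T(1+qx)-\sqx_T(x)$ vanishes at $x=1/(1-q)$ — there $1+qx=x$ — and is therefore divisible by $1+(q-1)x$. Since the $q$-integers $[0]_q,[1]_q,[2]_q,\dots$ are pairwise distinct elements of $\QQ(q)$, two polynomials agreeing at all of them coincide, which yields the proposition. The only delicate points are the bookkeeping identity $[n+1]_q=1+q[n]_q$, which is what makes the Hahn operator interact correctly with the shift $x\mapsto 1+qx$, and the remark that $\Delta$ preserves polynomials so that the interpolation argument is legitimate; I do not expect any serious obstacle beyond these.
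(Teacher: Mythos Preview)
Your proposal is correct and follows essentially the same route as the paper: both evaluate at the $q$-integers, invoke Theorem~\ref{valeur_n_positif}, compute the difference $F^{(n+1)}_T-F^{(n)}_T$ (you via the recursion~\eqref{calcul_Fq}, the paper via the equivalent combinatorial remark that such colorings are exactly those with root color $n{+}1$), and conclude by interpolation. Your extra care in checking that $\Delta$ preserves polynomials and in spelling out $[n{+}1]_q=1+q[n]_q$ only makes explicit what the paper leaves implicit.
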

\begin{proof}
  Il s'agit de montrer une égalité entre deux polynômes en $x$. Il
  suffit de montrer l'égalité de leurs valeurs en $[n]_q$ pour tout
  entier positif $n$.

  Par la description \eqref{delta_val} de l'action de $\Delta$ sur les
  valeurs en $[n]_q$ et par le théorème \ref{valeur_n_positif}, on trouve
  que $q^n \Delta (\sqx_T)([n]_q)$ compte les coloriages décroissants
  de $T$ par les entiers entre $0$ et $n+1$, sous la condition de
  contenir $n+1$.

  Pour compter ces coloriages, on peut aussi multiplier le produit des
  valeurs de $\sqx_{T_i}$ en $[n+1]_q$ par le facteur $q^{n+1}$
  correspondant au coloriage de la racine de $T$ par $n+1$. On a donc
  \begin{equation*}
    q^n \Delta (\sqx_T)([n]_q) = q^{n+1} \prod_{i=1}^{k} \sqx_{T_i}([n+1]_q).
  \end{equation*}

  On en déduit le résultat.
\end{proof}

Considérons l'action de $\Delta$ sur l'espace $\QQ(q)[x]$ des
polynômes en $x$ à coefficients dans $\QQ(q)$. Le noyau de $\Delta$
est le sous-espace $\QQ(q)$ des polynômes constants. Un supplémentaire
est fourni par les polynômes qui sont divisibles par $1+q x$. La
restriction de $\Delta$ à ce supplémentaire est un isomorphisme avec
l'espace $\QQ(q)[x]$.

\subsection{Ombre de Bernoulli}

On rappelle que les nombres de Bernoulli-Carlitz \cite{carlitz} sont
des fractions en $q$ définies par $\beta_0=1$ et
\begin{equation}
  \label{recu_beta}
  q(q \beta +1)^n-\beta_n=
  \begin{cases}
    1 \text{ si }n=1,\\
    0 \text{ si }n>1,
   \end{cases}
\end{equation}
où par convention on remplace $\beta^k$ par $\beta_k$ après avoir
développé la puissance du binôme.

Soit $P$ un polynôme en $x$ à coefficient dans $\QQ(q)$. On appelle
$q$-ombre de $P$ la valeur en $P$ de la forme $\QQ(q)$-linéaire qui
envoie $x^n$ sur le nombre de Bernoulli-Carlitz $\beta_n$. On note
$\Psi(P)$ la $q$-ombre de $P$, qui est une fraction en $q$.

Soient $T_1,\dots,T_k$ des arbres enracinés et soit $T=\bop(T_1,\dots,T_k)$.
\begin{theorem}
  \label{ombral_iti}
  On a
  \begin{equation}
    \label{ombral_id}
    \baro_{q,T} = \Psi\left(\prod_{i=1}^{k} \sqx_{T_i}\right).
  \end{equation}
\end{theorem}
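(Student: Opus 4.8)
The plan is to connect the two relations we already have in hand: on the one hand Proposition~\ref{action_delta}, which says $\Delta(\sqx_T) = q\prod_{i=1}^k \sqx_{T_i}(1+qx)$ when $T=\bop(T_1,\dots,T_k)$; on the other hand the functional equation \eqref{defi_baro} characterizing $\baro_q$, together with Lemma~\ref{lemme_corolle} applied to a suitable rewriting of it. First I would extract from \eqref{defi_baro} a recursion for the coefficient $\baro_{q,T}$ when $T=\bop(T_1,\dots,T_k)$: rewriting \eqref{defi_baro} as $\baro_q = \COR\diam(\pun, \text{something})$ plus lower-order corrections and feeding it into Lemma~\ref{lemme_corolle} should express $\baro_{q,T}$ as $q$ times a product $\prod_i (\Sigma_q\baro_q)_{T_i}$-type quantity, i.e. in terms of the coefficients $\baro_{q,T_i}$ of the subtrees scaled by the suspension. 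So the statement reduces to an identity relating the umbral functional $\Psi$, the Hahn operator $\Delta$, and this recursion.

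The key mechanism is the following dictionary. The Bernoulli--Carlitz recursion \eqref{recu_beta} says precisely that the linear functional $\Psi$, when pre-composed with "shift by $q\beta+1$" (umbrally) and multiplied by $q$, differs from $\Psi$ itself only on the monomial $x$. In operator terms: $\Psi$ is the functional that inverts $\Delta$ in the sense that $\Psi(\Delta P) = $ (boundary-type term) — more precisely $q\,\Psi(P(1+qx)) - \Psi(P) $ equals the coefficient of $x$ in $P$ (up to the normalization in \eqref{recu_beta}), which one recognizes as $\Psi$ applied to $\Delta$ of an antiderivative. The cleanest route: show that for any polynomial $Q\in\QQ(q)[x]$, if $P$ is the unique polynomial divisible by $1+qx$ with $\Delta P = Q$ (such $P$ exists by the isomorphism statement at the end of the "Opérateur de Hahn" subsection), then $\Psi(Q) = \Psi(\Delta P)$ is computed by the Bernoulli--Carlitz recursion to be exactly the "constant term adjustment" — and that this matches the $\baro_q$ recursion term by term.

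Concretely, I would argue by induction on $\#T$. For $T=\pun$ (the case $k=0$, empty product), the right-hand side is $\Psi(1)=\beta_0=1$, and one checks $\baro_{q,\pun}$ from \eqref{defi_baro} in degree $1$ equals... (the degree-one component of \eqref{defi_baro} gives $\baro_{q,\pun}$, which should come out to match $\beta_0$ after accounting for the $(q-1)\pun$ term — this needs a small direct check, possibly the base case is $k=0$ giving $\Psi(\text{empty product})=1$ and matching $\baro_{q}$ in the appropriate degree). For the inductive step with $T=\bop(T_1,\dots,T_k)$, set $Q = q\prod_{i=1}^k \sqx_{T_i}(1+qx) = \Delta(\sqx_T)$ by Proposition~\ref{action_delta}, so $\sqx_T$ is (a choice of) $\Delta$-antiderivative of $Q$, and moreover $\sqx_T$ is divisible by $1+qx$ by Proposition~\ref{facteurs_connus} (since $\#T\ge 1$ so $H\ge 1$). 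Then $\Psi(\prod_i \sqx_{T_i}) = \frac1q\Psi(Q(x)\text{ un-shifted})$; I relate $\Psi(Q)$ to $\Psi(\sqx_T)$-type data via \eqref{recu_beta}, and separately I expand the $\baro_q$-recursion extracted from \eqref{defi_baro} for the same tree $T$, using the induction hypothesis $\baro_{q,T_i} = \Psi(\prod \sqx_{(\dots)})$ on the subtrees; matching the two gives the claim.

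The main obstacle I anticipate is bookkeeping the shift and the suspension correctly: Proposition~\ref{action_delta} produces $\sqx_{T_i}(1+qx)$ (an argument shift), the functional $\Psi$ is defined on the un-shifted monomials, and \eqref{defi_baro} involves $\Sigma_q\baro_q$ (a grading twist, not an argument shift); reconciling "shift in $x$" with "suspension in the tree grading" is exactly the kind of place where a stray power of $q$ creeps in. I expect the resolution is that the Bernoulli--Carlitz recursion \eqref{recu_beta} is \emph{designed} so that $\Psi$ intertwines the operator $P\mapsto qP(1+qx)$ with the operation on $\baro_q$-coefficients coming from $\Sigma_q$ plus the $(q-1)\pun$ inhomogeneity — so the proof is really the observation that \eqref{recu_beta}, \eqref{defi_baro}, and Proposition~\ref{action_delta} are three avatars of the same recursion, and the work is aligning normalizations. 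A secondary subtlety: one must know $\Psi$ is well-defined on fractions (it is, being $\QQ(q)$-linear on polynomials and the $\sqx_{T_i}$ are polynomials in $x$), and that $\baro_{q,T}$ — a priori a fraction in $q$ with cyclotomic denominator — indeed equals the fraction $\Psi(\prod\sqx_{T_i})$; this is automatic once the recursions match since both are determined by the same data.
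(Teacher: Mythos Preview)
Your plan has a genuine structural gap. The induction on $\#T$ you propose cannot close: the hypothesis gives you $\baro_{q,T_i} = \Psi\bigl(\prod_j \sqx_{T_{i,j}}\bigr)$ for each subtree $T_i=\bop(T_{i,1},\dots)$, but the target is $\Psi\bigl(\prod_i \sqx_{T_i}\bigr)$, and $\Psi$ is not multiplicative, so there is no way to assemble the target from the inductive information. Worse, the recursion you hope to extract from \eqref{defi_baro} --- ``rewriting as $\baro_q=\COR\diam(\pun,\text{something})$'' --- does not exist: \eqref{defi_baro} involves $\COR\diam(\baro_q,-\pun)$ (removing leaves from $T$ and weighting by signs) and $\pun\pl\Sigma_q\baro_q$ (which only sees trees of the form $\bop(S)$ with a single $S$), neither of which yields a product over the $T_i$. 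So neither side of your proposed match has the shape you need.

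What you are missing is Theorem~\ref{valeur_speciale}. The paper's argument runs as follows: by that theorem, $\baro_{q,T}$ is the value at $x=-1/q$ of $\sqx_T/(1+qx)$; combining with Proposition~\ref{action_delta}, one has $\sqx_T=\Delta^{-1}\bigl(q\prod_i \sqx_{T_i}(1+qx)\bigr)$ where $\Delta^{-1}$ lands in the polynomials divisible by $1+qx$. Hence the left-hand side is the composite $P\mapsto \bigl[\Delta^{-1}\bigl(qP(1+qx)\bigr)/(1+qx)\bigr]_{x=-1/q}$ applied to $P=\prod_i \sqx_{T_i}$. This is manifestly $\QQ(q)$-linear in $P$, just like $\Psi$. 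So the identity reduces to checking that these two linear functionals agree on a basis; taking $(1+qx)^k$ reduces to the case $T_i=\pun$ for all $i$, i.e.\ $T=\corol_k$. There the right side is $\Psi((1+qx)^k)$, computed directly from \eqref{recu_beta}, while the left side is the corolla coefficient in $\baro_q$, known via \eqref{def_baroq} and the fact that corolla coefficients of $\Omega_q$ are the $\beta_k$. No induction, no tree recursion --- the whole proof is linearity plus a single explicit check on corollas. Your middle paragraph (``the cleanest route'') was gesturing toward this, but you abandoned it for the induction, which is the wrong tool here.
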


\begin{proof}
  L'opérateur $\Psi$ est un opérateur $\QQ(q)$-linéaire, agissant sur
  l'espace vectoriel des polynômes en $x$. On va montrer que le côté
  gauche de cette égalité est aussi la valeur d'un opérateur
  $\QQ(q)$-linéaire agissant sur le même argument, et comparer ensuite
  leurs valeurs sur les puissances de $1+q x$.

  Par le théorème \ref{valeur_speciale}, le côté gauche est la
  valeur limite de $\sqx_T/(1+q x)$ lorsque $x=-1/q$. L'application qui
  associe à un polynôme $P$ divisible par $1+ q x$ la limite de
  $P/(1+q x)$ lorsque $x=-1/q$ est évidemment linéaire.

  Par la proposition \ref{action_delta}, on a
  \begin{equation*}
    \sqx_T = \Delta^{-1} \left( q \prod_{i=1}^{k} \sqx_{T_i}(1+q x) \right),
  \end{equation*}
  où $\Delta^{-1}$ est l'opérateur inverse de la restriction de
  $\Delta$ aux polynômes divisibles par $1 +q x$.

  Les deux côtés de \eqref{ombral_id} sont donc des opérateurs
  $\QQ(q)$-linéaires sur l'espace vectoriel des polynômes en $x$. Il
  suffit donc de comparer leurs valeurs sur la base de cet espace
  donnée par les puissances $(1+q x)^k$ pour $k\geq 0$.

  Comme $\sqx_{\pun} = 1+q x$, ceci revient à vérifier l'identité
  voulue lorsque tous les $T_i$ sont égaux à $\pun$. Par la définition
  de $\Psi$, et par \eqref{recu_beta}, on obtient à droite $1$ si
  $k=0$, $1/(q+1)$ si $k=1$ et $\beta_k/q$ si $k\geq 2$. 

  On vérifie aisément que les coefficients de $\pun$ et de $\corol_1$
  dans $\baro_q$ sont $1$ et $1/(1+q)$. Par \cite[(5.11)]{carlitz}, on a
  \begin{equation*}
    \beta_k(1/q) = (-1)^k q^{k-1} \beta_k,
  \end{equation*}
  pour $k \geq 2$. Comme le coefficient de $\corol_k$ dans $\Omega_q$
  est $\beta_k$, on déduit de \eqref{def_baroq} que le coefficient de
  $\corol_k$ dans $\baro_q$ est aussi donné par $\beta_k/q$ si $k\geq
  2$.
\end{proof}

\begin{remark}
  \label{linear_baro}
  Dans l'article \cite{chap_omega}, on a montré que le coefficient de
  $\linear_n$ dans la série $\Omega_q$ est $(-1)^{n-1}/[n]_q$. Le
  coefficient de $\linear_n$ dans la série $\baro_q$ est donc
  $1/[n]_q$. 
\end{remark}

\begin{lemma}
  \label{petit_calcul_ombral}
  Pour tout $n \geq 1$, on a
  \begin{equation}
    \Psi(-x \sqx_{\linear_{n}}) = \frac{1}{[n+2]_q}.
  \end{equation}
\end{lemma}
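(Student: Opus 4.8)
The goal is to compute $\Psi(-x\,\sqx_{\linear_n})$, where by \eqref{formule_linear} we have the explicit product formula $\sqx_{\linear_n} = (1+qx)\prod_{i=2}^{n}\frac{[i]_q + q^i x}{[i]_q}$. The first observation is that $-x\,\sqx_{\linear_n}$ is not divisible by $1+qx$, but it is closely related to a quantity that we can handle via Theorem \ref{ombral_iti}. Indeed, $\linear_{n+1} = \bop(\linear_n)$, so the $k=1$ case of \eqref{ombral_id} gives $\baro_{q,\linear_{n+1}} = \Psi(\sqx_{\linear_n})$, and by Remark \ref{linear_baro} this equals $1/[n+1]_q$. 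The plan is to relate $\Psi(-x\,\sqx_{\linear_n})$ to $\Psi(\sqx_{\linear_n})$ and $\Psi(\sqx_{\linear_{n+1}})$ by finding a simple linear relation, with coefficients in $\QQ(q)$, between the three polynomials $x\,\sqx_{\linear_n}$, $\sqx_{\linear_n}$ and $\sqx_{\linear_{n+1}}$ in the variable $x$.

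The key computation is the following. From \eqref{formule_linear},
\begin{equation*}
  \sqx_{\linear_{n+1}} = \sqx_{\linear_n}\cdot \frac{[n+1]_q + q^{n+1}x}{[n+1]_q},
\end{equation*}
so that
\begin{equation*}
  q^{n+1}\, x\, \sqx_{\linear_n} = [n+1]_q\,\sqx_{\linear_{n+1}} - [n+1]_q\,\sqx_{\linear_n}.
\end{equation*}
Applying the $\QQ(q)$-linear operator $\Psi$ and dividing by $-q^{n+1}$ yields
\begin{equation*}
  \Psi(-x\,\sqx_{\linear_n}) = \frac{[n+1]_q}{q^{n+1}}\Bigl(\Psi(\sqx_{\linear_n}) - \Psi(\sqx_{\linear_{n+1}})\Bigr)
  = \frac{[n+1]_q}{q^{n+1}}\left(\frac{1}{[n+1]_q} - \frac{1}{[n+2]_q}\right),
\end{equation*}
using $\Psi(\sqx_{\linear_n}) = \baro_{q,\linear_{n+1}} = 1/[n+1]_q$ and likewise $\Psi(\sqx_{\linear_{n+1}}) = 1/[n+2]_q$. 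It then remains to simplify the right-hand side: $\frac{1}{[n+1]_q} - \frac{1}{[n+2]_q} = \frac{[n+2]_q - [n+1]_q}{[n+1]_q[n+2]_q} = \frac{q^{n+1}}{[n+1]_q[n+2]_q}$, so the whole expression collapses to $\frac{1}{[n+2]_q}$, as claimed.

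The only genuine subtlety — and the place to be careful — is the appeal to Theorem \ref{ombral_iti} for $\Psi(\sqx_{\linear_n})$: one must note that $\sqx_{\linear_n}$ is indeed the product $\prod_{i=1}^{k}\sqx_{T_i}$ occurring in \eqref{ombral_id} with $k=1$ and $T_1 = \linear_n$, since $\bop(\linear_n) = \linear_{n+1}$; and for $n=1$ the degenerate case $\linear_1 = \pun$ must be checked against the value $\baro_{q,\corol_1} = 1/(1+q) = 1/[2]_q$, which matches the formula with $n=1$. Everything else is a routine manipulation of $q$-integers, so no further obstacle is expected.
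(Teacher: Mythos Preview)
Your proposal is correct and follows essentially the same approach as the paper's own proof: both deduce $\Psi(\sqx_{\linear_n})=1/[n+1]_q$ from Theorem~\ref{ombral_iti} and Remark~\ref{linear_baro}, then use the recurrence $\sqx_{\linear_{n+1}} = \sqx_{\linear_n}\cdot\frac{[n+1]_q+q^{n+1}x}{[n+1]_q}$ from \eqref{formule_linear} to express $\Psi(x\,\sqx_{\linear_n})$ in terms of $\Psi(\sqx_{\linear_n})$ and $\Psi(\sqx_{\linear_{n+1}})$, and finish with the $q$-integer identity $[n+2]_q-[n+1]_q=q^{n+1}$. The separate $n=1$ check you mention is harmless but unnecessary, since Theorem~\ref{ombral_iti} with $k=1$, $T_1=\pun$ already gives that case.
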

\begin{proof}
  Par la remarque \ref{linear_baro} et le théorème
  \ref{ombral_iti}, on déduit que
  \begin{equation*}
    \Psi(\sqx_{\linear_{n}}) = \frac{1}{[n+1]_q}.
  \end{equation*}
  Par ailleurs, en se servant de la forme explicite de
  $\sqx_{\linear_{n+1}}$ donnée par \eqref{formule_linear}, on peut
  montrer que
  \begin{equation*}
    \frac{q^{n+1}}{[n+1]_q} \Psi(x \sqx_{\linear_{n}}) =  \Psi(\sqx_{\linear_{n+1}})-\Psi(\sqx_{\linear_{n}}).
  \end{equation*}
  Le résultat voulu en découle.
\end{proof}

\begin{theorem}
  \label{ombral_nui}
  On a
  \begin{equation}
    \baro_{q,\bop T} = \Psi\left( - x \prod_{i=1}^{k} \sqx_{T_i}\right).
  \end{equation}
\end{theorem}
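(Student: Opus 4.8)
The plan is to deduce Theorem \ref{ombral_nui} from Theorem \ref{ombral_iti} together with one identity for the $q$-Bernoulli umbra $\Psi$. First I would note that applying Theorem \ref{ombral_iti} to the tree $\bop T$, which has the single subtree $T=\bop(T_1,\dots,T_k)$, gives $\baro_{q,\bop T}=\Psi(\sqx_T)$; so it suffices to prove
\[
\Psi(\sqx_T)=\Psi\Big(-x\prod_{i=1}^{k}\sqx_{T_i}\Big).
\]
By Proposition \ref{facteurs_connus} (with $H\geq 1$), $\sqx_T$ is divisible by $1+qx$, and by Proposition \ref{action_delta}, $\Delta(\sqx_T)=q\prod_i\sqx_{T_i}(1+qx)$. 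Since, for any polynomial $g$, one has $(\Delta g)\big(\tfrac{x-1}{q}\big)=\dfrac{q\,(g(x)-g(\tfrac{x-1}{q}))}{1+(q-1)x}$ (because $1+q\cdot\tfrac{x-1}{q}=x$ and $1+(q-1)\cdot\tfrac{x-1}{q}=\tfrac{1+(q-1)x}{q}$), substituting $x\mapsto\tfrac{x-1}{q}$ in the relation of Proposition \ref{action_delta} and simplifying yields
\[
\prod_{i=1}^{k}\sqx_{T_i}(x)=\frac{\sqx_T(x)-\sqx_T\big(\tfrac{x-1}{q}\big)}{1+(q-1)x}.
\]
Hence the target reduces to the general claim $(\dagger)$: for every polynomial $g\in\QQ(q)[x]$ divisible by $1+qx$,
\[
\Psi(g)=\Psi\!\left(\frac{-x\,\big(g(x)-g(\tfrac{x-1}{q})\big)}{1+(q-1)x}\right),
\]
where the right-hand side is a genuine polynomial since its numerator vanishes at the fixed point $x=1/(1-q)$ of $x\mapsto\tfrac{x-1}{q}$.

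Both sides of $(\dagger)$ are $\QQ(q)$-linear in $g$, so I would check it on the basis $g=(1+qx)^m$, $m\geq 1$, of the space of multiples of $1+qx$. For such $g$ one has $g\big(\tfrac{x-1}{q}\big)=x^m$, and since $1+(q-1)x=(1+qx)-x$, the right-hand side of $(\dagger)$ becomes $\Psi(-x\,Q_m)$ with
\[
Q_m(x):=\frac{(1+qx)^m-x^m}{(1+qx)-x}=\sum_{j=0}^{m-1}(1+qx)^j x^{m-1-j}.
\]
Using the identity $Q_{m+1}=(1+qx)^m+x\,Q_m$, the claim $(\dagger)$ for $g=(1+qx)^m$ becomes exactly $\Psi(Q_{m+1})=0$. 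So everything reduces to showing $\Psi(Q_n)=0$ for all $n\geq 2$.

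To prove this last point I set $S_n=\Psi(Q_n)$, so that $S_1=\Psi(1)=1$. The two telescoping identities $Q_n=(1+qx)^{n-1}+xQ_{n-1}$ and $Q_n=x^{n-1}+(1+qx)Q_{n-1}$, after applying $\Psi$ and using $\Psi((1+qx)Q_{n-1})=S_{n-1}+q\Psi(xQ_{n-1})$, give
\[
S_n=\Psi\big((1+qx)^{n-1}\big)+\Psi(xQ_{n-1}),\qquad S_n=\beta_{n-1}+S_{n-1}+q\,\Psi(xQ_{n-1}).
\]
Eliminating $\Psi(xQ_{n-1})$ yields $(q-1)S_n=q\,\Psi((1+qx)^{n-1})-\beta_{n-1}-S_{n-1}$. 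For $n\geq 2$ the Bernoulli–Carlitz recurrence \eqref{recu_beta}, in umbral form $q\,\Psi((1+qx)^{n-1})-\beta_{n-1}=\delta_{n-1,1}$, reduces this to $(q-1)S_n=\delta_{n,2}-S_{n-1}$, whence $S_2=0$ and then $S_n=0$ for all $n\geq 2$ by induction. This finishes the proof.

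The main obstacle is the bookkeeping of the first paragraph: correctly propagating the substitution $x\mapsto\tfrac{x-1}{q}$ through Propositions \ref{action_delta} and \ref{facteurs_connus}, and checking that every expression appearing is an actual polynomial so that $\Psi$ may be applied. The choice of the basis $(1+qx)^m$ is what makes $(\dagger)$ collapse to the transparent statement $\Psi(Q_n)=0$; once the problem is in that form, the Bernoulli–Carlitz computation is routine.
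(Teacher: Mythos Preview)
Your argument is correct and is a genuinely different route from the paper's. The paper also reduces to a comparison of two $\QQ(q)$-linear operators in $P=\prod_i\sqx_{T_i}$, but it checks the identity on the basis $\{1\}\cup\{\sqx_{\linear_n}:n\geq1\}$, invoking the explicit coefficients $\baro_{q,\linear_n}=1/[n]_q$ (Remark~\ref{linear_baro}) and the auxiliary Lemma~\ref{petit_calcul_ombral}. You instead first apply Theorem~\ref{ombral_iti} to $\bop T$ to rewrite the left side as $\Psi(\sqx_T)$, invert the shift in Proposition~\ref{action_delta} via $x\mapsto(x-1)/q$, and then test the resulting linear identity $(\dagger)$ on the basis $\{(1+qx)^m:m\geq1\}$; this collapses the problem to the purely umbral statement $\Psi(Q_n)=0$ for $n\geq2$, which you derive straight from the Bernoulli--Carlitz recurrence~\eqref{recu_beta}. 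Your path is more self-contained (no appeal to the linear-tree coefficients of $\baro_q$ or to Lemma~\ref{petit_calcul_ombral}) and yields the pleasant byproduct $\Psi\big(\sum_{j=0}^{n-1}(1+qx)^jx^{n-1-j}\big)=0$; the paper's path, on the other hand, keeps the combinatorics of the linear trees visible.
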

\begin{proof}
  Pour les mêmes raisons que dans la preuve du théorème
  \ref{ombral_iti}, les deux membres de cette égalité sont les valeurs
  en le polynôme $\prod_{i=1}^{k} \sqx_{T_i}$ de deux opérateurs
  $\QQ(q)$-linéaires. Il suffit donc de les comparer sur la base
  formée par les polynômes $\sqx_{\linear_n}$ pour $n\geq 1$ et par le
  polynôme constant $1$.

  Pour le polynôme constant $1$, on constate que l'opposé du nombre de
  Bernoulli-Carlitz $\beta_1$ est $1/(1+q)$, qui est exactement le
  coefficient de l'arbre $\linear_2$ dans $\baro_q$.

  Lorsqu'on prend pour seul $T_i$ l'arbre $\linear_n$, le membre de
  gauche est $1/[n+2]_q$ par la remarque \ref{linear_baro}. Le membre
  de droite a la même valeur par le lemme \ref{petit_calcul_ombral}.
\end{proof}

\section{Conjectures}

\label{conjectures}

On présente dans cette section deux conjectures sur les séries $\sqx$
et $\Omega_q$.

\subsection{Polygones de Newton des numérateurs}

\begin{figure}[h!]
  \centering
  \includegraphics[height=2cm]{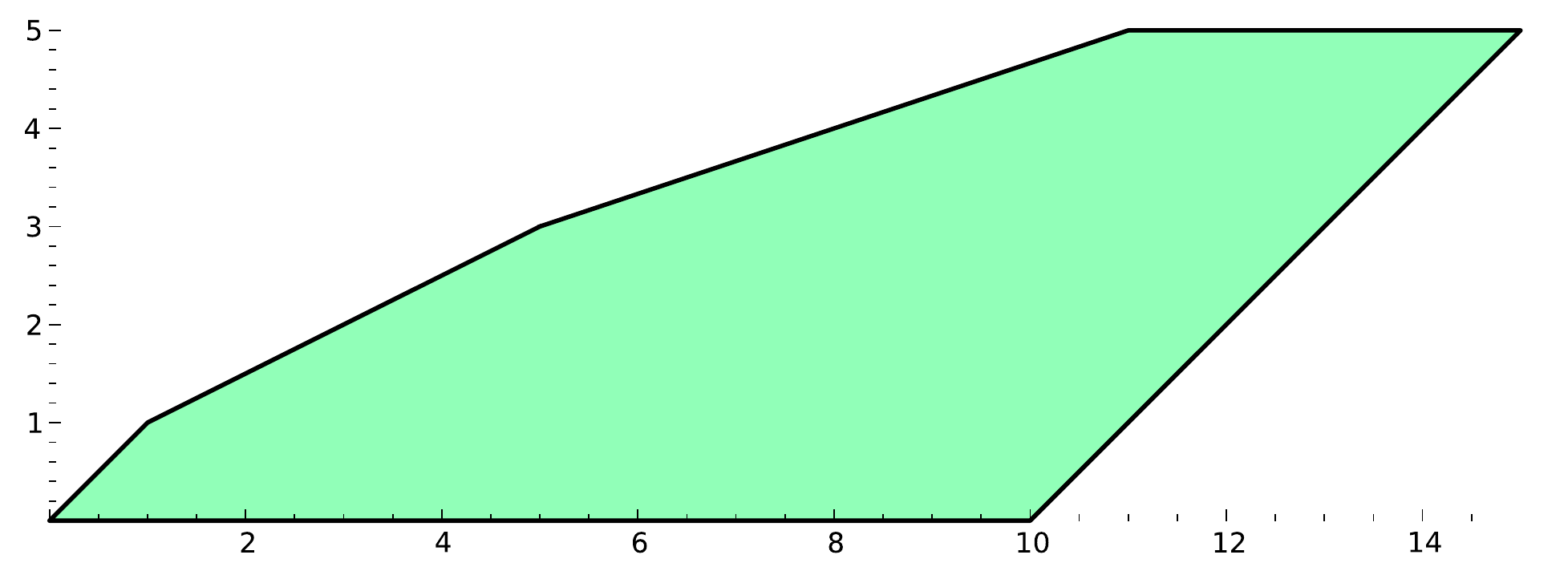}
 \caption{Polygone de Newton du numérateur d'un coefficient $\sqx_T$.}
  \label{fig:newton}
\end{figure}

Soit $\square_T$ le polygone de Newton du numérateur de $\sqx_T$, voir
la figure \ref{fig:newton} pour celui associé à l'arbre de la figure
\ref{fig:arbre5}.

Il semble que la forme de $\square_T$ soit comme suit.

\begin{conjecture}
  \label{conj_newton}
  Le bord supérieur de $\square_T$ est une droite horizontale
  correspondant au coefficient de degré $\# T$ par rapport à la
  variable $x$. Le bord inférieur est une droite horizontale
  correspondant au coefficient constant par rapport à la variable
  $x$. Le bord droit est une droite de pente $1$. Le bord gauche est
  une suite de segments de pente $1/i$ pour tout $i$ entre $1$ et la
  hauteur de $T$. Le segment de pente $1/i$ a pour hauteur le nombre
  de sommets de hauteur $i$ dans $T$.
\end{conjecture}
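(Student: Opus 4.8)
The conjecture describes the Newton polygon $\square_T$ of the numerator of $\sqx_T$, and the natural strategy is to determine separately its four boundary edges and to verify that the numerator has no monomials outside the claimed region. The plan is to work not with the numerator directly but with the full polynomial-in-$x$ rational-coefficient object $\sqx_T \in \QQ(q)[x]$, keeping track of the $q$-adic valuation (lowest power of $q$) and $q$-degree (highest power of $q$) of each coefficient $[x^j]\sqx_T$. Since the denominator is a product of cyclotomic polynomials $\prod \Phi_d$, whose lowest term is a nonzero constant and whose $q$-degree is fixed, the passage between $\sqx_T$ and its numerator shifts all valuations and degrees by the same affine amounts in $j$; so it suffices to establish the corresponding statements for $\sqx_T$ itself and then check the shift does not distort the shape. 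The four claims to establish are: (i) the top edge is horizontal, i.e.\ $[x^{\# T}]\sqx_T$ is (up to the cyclotomic denominator) a monomial — equivalently the $x=\infty$ value has a single power of $q$ in each coefficient, which is \emph{exactly} the content of the proposition computing $\sqx\mid_{x=\infty}$, giving $[T]!_q^{-1} = q^{\sum_v \# T_v}/\prod_v[\# T_v]_q$; (ii) the bottom edge is horizontal, i.e.\ the constant term $[x^0]\sqx_T = \sqx_T\mid_{x=0}$, which by the proposition on the value at $x=0$ equals the coefficient $E_T = 1$ for all $T$ — so the constant term is literally $1$ and the bottom edge is the horizontal line at height $0$; (iii) the right edge has slope $1$; (iv) the left edge is the concatenation of segments of slope $1/i$, the slope-$1/i$ segment having horizontal extent equal to the number of height-$i$ vertices.

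For the core claims (iii) and (iv), I would argue by induction on $\# T$ using the recursion $T = \bop(T_1,\dots,T_k)$ together with Proposition~\ref{action_delta}: $\Delta(\sqx_T) = q\prod_{i=1}^k \sqx_{T_i}(1+qx)$. The key observations are that the substitution $x \mapsto 1+qx$ multiplies the degree-$j$ coefficient's $q$-information by $q^j$ (roughly: it replaces a monomial $q^a x^j$ by a polynomial whose top term is $q^{a+j}x^j$ and whose bottom term is $q^a x^0$ — so it preserves the top edge up to a global shear of slope $1$ and "fills down" to a horizontal bottom), and that $\Delta^{-1}$ (the inverse of $\Delta$ on the space of polynomials divisible by $1+qx$) acts on the Newton polygon in a controlled way: if $\Delta$ is division by $q-1$ composed with the difference $f(1+qx)-f(x)$, then on a polynomial whose Newton polygon has horizontal top and bottom edges, $\Delta^{-1}$ adds one to the $x$-degree and adds a slope-$1/(\text{new height})$ segment to the left boundary. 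Concretely: the product $\prod_i \sqx_{T_i}(1+qx)$ has, by induction and the shear from $x\mapsto 1+qx$, a Newton polygon whose left boundary consists of slope-$1/i$ pieces for $i$ from $1$ to $\max_i(\text{height}(T_i)) = \text{height}(T)-1$ with the correct horizontal extents; applying $\Delta^{-1}$ raises the $x$-degree by $1$ and — because the new vertices are precisely the $\text{height}(T)$ new vertices of $T$ (the root plus the roots of the $T_i$)... no: the height-$i$ vertices of $T$ are exactly the height-$(i-1)$ vertices of the forest $\{T_i\}$ for $i\ge 2$, plus the single root at height $1$ — appends a slope-$1/\text{height}(T)$ segment of horizontal extent $1$ and shifts the earlier slopes' extents accordingly.

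The arithmetic of the horizontal extents is the bookkeeping heart of the argument: one must check that under $\Delta^{-1}$, the slope-$1/i$ segment of $\Delta(\sqx_T)$ (which came from height-$i$ vertices of the forest $T_1,\dots,T_k$, i.e.\ height-$(i+1)$ vertices of $T$) becomes the slope-$1/(i+1)$ segment of $\sqx_T$, and that the brand-new slope-$1$ piece of extent $1$ accounts for the root. I would verify this by the explicit formula in the linear case, $\sqx_{\linear_n} = (1+qx)\prod_{i=2}^n \frac{[i]_q + q^i x}{[i]_q}$, whose numerator $\prod_{i=1}^n([i]_q + q^i x)$ has Newton polygon with top edge horizontal at height $0$ (leading coefficient $\prod q^i$ — a monomial), bottom edge horizontal, right edge slope $1$, and left edge a single slope-$1/n$... no, here the subtlety is that the linear tree has one vertex at each height $1,\dots,n$, so the left edge should be $n$ unit-extent segments of slopes $1/1, 1/2, \dots, 1/n$, which one reads off directly from the factored form since the factor $[i]_q + q^i x$ contributes a point at $x$-degree $1$, $q$-degree $i$ and at $x$-degree $0$, $q$-degree $i-1$. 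This confirms the inductive mechanism and pins down the extent bookkeeping.

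The main obstacle I anticipate is \textbf{(iii)}, controlling the \emph{right} edge (equivalently, showing the $q$-degree of $[x^j]\sqx_T$ grows exactly linearly in $j$ with slope $1$, with no "bulge" to the right), because multiplying the series $\sqx_{T_i}$ and applying $\Delta^{-1}$ could a priori overshoot; one needs a sharp upper bound $\deg_q [x^j]\sqx_T \le j + (\text{constant depending only on }T)$ matching the top-edge value at $j = \# T$, and this requires showing the two extremal data — the monomial leading coefficient from the $x=\infty$ proposition and the slope-$1$ growth — are compatible, i.e.\ that no cancellation in the numerator is needed to achieve the claimed degree bound. A secondary subtlety is ensuring that the cyclotomic denominator (which by Conjecture~\ref{conj_denominateur} is only \emph{conjecturally} $\prod_{d=2}^{n+1}\Phi_d$ for corollas) does not interfere: since we cannot assume that conjecture, we must phrase everything in terms of whatever the actual denominator is, using only that it is a product of $\Phi_d$'s and hence has a nonzero constant term and a definite $q$-degree, so that clearing it shifts the Newton polygon by a global shear without changing the \emph{shape} of its boundary — and then carefully check that this shear takes the (provably correct) shape of $\sqx_T$'s Newton polygon to the claimed shape of the numerator's.
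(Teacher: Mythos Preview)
The statement you are attempting to prove is labeled \textbf{Conjecture} in the paper, not Théorème or Proposition: the author states it without proof and offers no argument for it anywhere in the text. There is therefore no ``paper's own proof'' to compare your proposal against. Any correct proof you produce would be new.

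As for the viability of your plan: parts (i) and (ii) are indeed essentially immediate from the propositions on $\sqx\mid_{x=\infty}$ and $\sqx\mid_{x=0}$, though note your gloss of (i) is slightly off --- ``horizontal top edge'' just means $\deg_x = \#T$ exactly, which is the content of the Remark after the $x=\infty$ proposition; the leading $x$-coefficient of the \emph{numerator} is $D(q)\cdot q^{\sum_v \#T_v}/\prod_v[\#T_v]_q$, which is generally not a monomial (check your own five-vertex example). The inductive scheme via Proposition~\ref{action_delta} for (iii) and (iv) is the natural line of attack, and your linear-tree verification is a good sanity check, but the real obstacles you flag are genuine and unresolved: you do not have a mechanism that controls the Newton polygon under $\Delta^{-1}$ sharply enough to rule out either overshoot on the right or unexpected cancellation on the left, and your bookkeeping of how the slope-$1/i$ segments shift under $\bop$ is visibly unfinished (you correct yourself mid-sentence and never settle the extent accounting). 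A further structural difficulty you only partly acknowledge: the conjecture concerns the numerator, and since the denominator $D(q)$ is itself not known in closed form (cf.\ Conjecture~\ref{conj_denominateur}, also open), passing between the Newton data of $\sqx_T$ and that of its numerator requires knowing $\deg_q D$ exactly, not just that $D$ is a product of cyclotomics with nonzero constant term; the left boundary transfers cleanly via $q$-adic valuation, but the right boundary does not. In short, your outline is a reasonable opening move on an open problem, not a proof.
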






\subsection{Arbres en forme de partitions}

On propose ici une conjecture sur les coefficients de $\Omega_q$.

A chaque partition $\lambda=(\lambda_1,\dots,\lambda_m)$ d'un entier
$n$, on associe l'arbre $T_\lambda$ à $n+1$ sommets défini par
\begin{equation*}
  T_\lambda = \bop (\linear_{\lambda_1},\dots,\linear_{\lambda_m}).
\end{equation*}

\begin{conjecture}
  Pour tout entier $k\geq 3$ impair, le coefficient $\Omega_{q,\bop(T_\lambda^k)}$
  est divisible par $\Phi_{1+\max\lambda}$ .
\end{conjecture}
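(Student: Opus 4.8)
\medskip

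A possible line of attack is the following. Write $h=1+\max\lambda$, which is precisely the height of $T_\lambda$, set $S=\bop(T_\lambda^k)$, and fix a primitive $h$-th root of unity $\omega$. Since $\baro_q=\Sigma_{(-1/q)}\Omega_{1/q}$ and $\Phi_h(1/q)$ differs from $\Phi_h(q)$ only by a unit of $\QQ(q)$, the conjecture is equivalent to $\Phi_h\mid\baro_{q,S}$, and by Theorem~\ref{ombral_iti} this reads
\[
  \Phi_h \mid \Psi\!\left(\sqx_{T_\lambda}^{\,k}\right).
\]
In the limiting case $h=1$ one has $T_\lambda=\pun$, $S=\corol_k$ and $\Psi(\sqx_{T_\lambda}^{k})=\beta_k/q$, so the assertion is exactly the vanishing of $\beta_k$ at $q=1$, that is, of the classical odd Bernoulli number $B_k$; the aim is to reproduce that phenomenon at each root of unity.

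The geometric ingredient is Proposition~\ref{facteurs_connus}: the polynomial $\prod_{i=1}^{h}\bigl([i]_q+q^ix\bigr)$ divides $\sqx_{T_\lambda}$, and in particular so does $1+qx$; hence $\sqx_{T_\lambda}^{k}=\sum_{m\geq k}\gamma_m(1+qx)^m$ for some $\gamma_m\in\QQ(q)$. Applying $\Psi$ and using \eqref{recu_beta} in the form $q\,\Psi\bigl((1+qx)^m\bigr)=\beta_m$, valid for $m\geq 2$ and hence for all $m\geq k$, one obtains, with $\tau(x)=qx+1$, the identity
\[
  q\,\Psi\!\left(\sqx_{T_\lambda}^{\,k}\right)
  =\Psi\!\left(\bigl(\sqx_{T_\lambda}\circ\tau^{-1}\bigr)^{k}\right).
\]
Since $\bigl([i]_q+q^ix\bigr)\circ\tau^{-1}=[i-1]_q+q^{i-1}x$, the factor $\prod_{i=1}^{h}\bigl([i]_q+q^ix\bigr)$ turns, after this substitution, into $\prod_{i=0}^{h-1}\bigl([i]_q+q^ix\bigr)=x\prod_{i=1}^{h-1}\bigl([i]_q+q^ix\bigr)$, and reducing modulo $\Phi_h$ this last polynomial equals, up to a nonzero constant,
\[
  \prod_{j=0}^{h-1}\bigl(x-\tau^{j}(0)\bigr)
  =\frac{\bigl((\omega-1)x+1\bigr)^{h}-1}{(\omega-1)^{h}},
\]
the product over the orbit of $0$ under the affine rotation $\tau\colon x\mapsto\omega x+1$, which has order $h$ about its fixed point $1/(1-\omega)$. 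One is thus reduced to proving that $\Psi\bigl((\sqx_{T_\lambda}\circ\tau^{-1})^{k}\bigr)$ vanishes at $q=\omega$, knowing that $\sqx_{T_\lambda}\circ\tau^{-1}$ is divisible in $\QQ(q)[x]$ by $\prod_{i=0}^{h-1}\bigl([i]_q+q^ix\bigr)$, whose reduction mod $\Phi_h$ is the orbit polynomial just written.

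The last step is the heart of the matter, and it is there that the parity of $k$ has to enter. The expected mechanism is a $q$-deformation, valid at $q=\omega$, of the reflection identity $B_n(1-x)=(-1)^nB_n(x)$ that underlies the classical vanishing: restricted to the multiples of the orbit polynomial $\prod_{j=0}^{h-1}(x-\tau^j(0))$, the functional $\Psi$ should be anti-equivariant for a dihedral symmetry of order $2h$ generated by the rotation $\tau$ and the reflection through the fixed point $1/(1-\omega)$ (for $h=1$ this degenerates to $x\mapsto 1-x$), so that $\Psi\bigl((\sqx_{T_\lambda}\circ\tau^{-1})^{k}\bigr)$ acquires a factor $(-1)^{k}$ under it, up to a harmless power of $\omega$, and is therefore forced to vanish exactly when $k$ is odd; the restriction $k\geq 3$ plays the role of the exclusion of $\beta_0$ and $\beta_1$ in the case $h=1$, namely it discards the exceptional low-degree part of the umbral expansion. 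The main obstacle is to establish this equivariance: one would need either a generating-function identity for the numbers $\beta_n$ refining the relation $\beta_n(1/q)=(-1)^nq^{n-1}\beta_n$ (valid for $n\geq 2$) to a genuine symmetry at roots of unity, or the description of $\Omega_q$ in terms of Lie idempotents and $h$-th roots of unity from \cite{chap_omega}, in which the sign attached to $k$ ought to appear directly. A further technical difficulty is that the denominators of $\sqx_{T_\lambda}$, unlike those of $\baro_q$, may contain $\Phi_h$, which makes the cancellations involved in the displayed identity delicate to control.
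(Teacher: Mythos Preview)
The statement you are trying to prove is presented in the paper as an open \emph{conjecture}; no proof is given. The paper merely observes that for the empty partition (so $T_\lambda=\pun$ and $h=1$) the claim reduces to the vanishing of the classical odd Bernoulli numbers $B_k$ for $k\geq 3$, and leaves everything else open. There is therefore no ``paper's own proof'' to compare against.

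Your proposal is not a proof either, and you say so yourself. The reformulation you carry out is sound: the passage from $\Omega_q$ to $\baro_q$, the use of Theorem~\ref{ombral_iti} to write $\baro_{q,S}=\Psi(\sqx_{T_\lambda}^{\,k})$, the shift $q\,\Psi(f)=\Psi(f\circ\tau^{-1})$ for $f$ divisible by $(1+qx)^k$, and the identification at $q=\omega$ of the shifted product $\prod_{i=0}^{h-1}([i]_q+q^ix)$ with the orbit polynomial of the affine rotation $\tau$ are all correct and illuminating. But the entire content of the conjecture is now packed into your final paragraph: you need an anti-equivariance of $\Psi$, at $q=\omega$, under a reflection compatible with $\tau$, and you have no argument for it beyond analogy with the $h=1$ case. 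Neither the relation $\beta_n(1/q)=(-1)^nq^{n-1}\beta_n$ nor the Lie-idempotent description of $\Omega_q$ in \cite{chap_omega} is known to yield such a symmetry at roots of unity, so this is a genuine gap, not a routine verification. The additional hazard you flag---that $\Phi_h$ may occur in the denominator of $\sqx_{T_\lambda}$---is real and would have to be handled before the specialisation $q=\omega$ even makes sense; this already shows that the displayed identity $q\,\Psi(\sqx_{T_\lambda}^{\,k})=\Psi((\sqx_{T_\lambda}\circ\tau^{-1})^{k})$, while valid in $\QQ(q)$, cannot be evaluated at $q=\omega$ term by term without further work. In short: a clean and suggestive reduction, but the conjecture remains a conjecture.
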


Dans le cas de la partition vide, l'arbre $T_\lambda$ est $\pun$ et
les arbres $\bop(T_\lambda^k)$ sont les corolles. Les coefficients des
corolles dans $\Omega_q$ sont les nombres de Bernoulli-Carlitz, qui
donnent les nombres de Bernoulli usuels en $q=1$. La conjecture est
donc bien connue dans ce cas.




\section{Résultat auxiliaire}

Soit $E$ la série en arbres définie par \eqref{defi_E}.

\label{auxiliaire}



\begin{proposition}
  \label{prop_gen}
  Pour toute série en arbres $A$, on a
  \begin{equation}
    \COR \diam (\COR \diam (A, E)),-\pun) = A.
  \end{equation}
\end{proposition}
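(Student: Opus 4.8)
Le plan est de calculer le coefficient d'un arbre enraciné $T$ quelconque dans $\COR\diam(\COR\diam(A,E),-\pun)$ et de vérifier qu'il vaut $A_T$. Puisque $\diam$ est linéaire en ses deux premiers arguments et que $A$ occupe un tel emplacement à chaque niveau de l'imbrication, les deux membres de l'égalité dépendent linéairement de $A$, de sorte qu'il suffit de mener ce calcul. L'outil est la formule de substitution qui sous-tend le lemme \ref{lemme_ABC} : pour deux séries en arbres $X$ et $Y$ quelconques,
\begin{equation*}
  \bigl(\COR\diam(X,Y)\bigr)_T=\sum_{T_0}X_{T_0}\prod_{K}Y_{K},
\end{equation*}
où $T_0$ parcourt les sous-arbres de $T$ contenant la racine — autrement dit les parties non vides de sommets stables vers le bas pour l'ordre $\leq$ ayant la racine pour minimum — et $K$ les composantes connexes de $T\setminus T_0$ ; le lemme \ref{lemme_ABC} est la lecture de cette identité lorsque tous les coefficients sont des cardinaux, et elle reste valable en général car les deux membres sont le même polynôme en les coefficients de $X$ et de $Y$. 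On spécialise le troisième emplacement. Comme $E$ a pour coefficient $1$ sur chaque arbre, on obtient $\bigl(\COR\diam(X,E)\bigr)_T=\sum_{T_0}X_{T_0}$. Comme l'arbre à un sommet est le seul sur lequel $-\pun$ a un coefficient non nul, égal à $-1$, seuls subsistent les $T_0$ dont le complémentaire est formé de sommets isolés, c'est-à-dire les $T_0=T\setminus S$ où $S$ est une partie quelconque de l'ensemble $L(T)$ des feuilles de $T$ ; d'où $\bigl(\COR\diam(Y,-\pun)\bigr)_T=\sum_{S\subseteq L(T)}(-1)^{\#S}\,Y_{T\setminus S}$.

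En reportant la première spécialisation, appliquée à $T\setminus S$, dans la seconde avec $Y=\COR\diam(A,E)$, on trouve
\begin{equation*}
  \bigl(\COR\diam(\COR\diam(A,E),-\pun)\bigr)_T=\sum_{S\subseteq L(T)}(-1)^{\#S}\sum_{T_0}A_{T_0},
\end{equation*}
la somme intérieure portant sur les sous-arbres $T_0$ de $T\setminus S$ contenant la racine. Une partie $T_0\subseteq T$ stable vers le bas et disjointe de $S$ est automatiquement stable vers le bas dans $T\setminus S$ : effacer des feuilles ne modifie pas la relation de filiation entre les sommets restants, et tout le chemin de la racine à $v$ demeure dans $T_0$ pour $v\in T_0$ ; la réciproque est claire. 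On peut donc intervertir les deux sommes, ce qui donne $\sum_{T_0}A_{T_0}\sum_{S\subseteq L(T)\setminus T_0}(-1)^{\#S}$, où $T_0$ parcourt les sous-arbres de $T$ contenant la racine. La somme intérieure vaut $(1-1)^{\#(L(T)\setminus T_0)}$, donc est nulle sauf si $T_0$ contient toutes les feuilles de $T$ ; or une partie stable vers le bas qui contient toutes les feuilles est $T$ tout entier, puisque tout sommet de $T$ est $\leq$ une feuille. Seul $T_0=T$ contribue, et le coefficient cherché vaut $A_T$.

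Une fois les deux formules de coefficients établies, l'argument sera donc très court. Le point qui demandera un peu de soin est précisément l'emploi de la formule de substitution avec la série non positive $-\pun$ dans le troisième emplacement, qui n'est pas linéaire : il faudra l'invoquer comme identité polynomiale en les coefficients, et non directement via le lemme \ref{lemme_ABC}. Il faudra aussi garder à l'esprit la convention, inoffensive, selon laquelle ``sous-arbre contenant la racine'' signifie non vide, de sorte que le cas $T=\pun$ se réduit trivialement à $A_{\pun}=A_{\pun}$.
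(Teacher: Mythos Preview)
Your argument is correct. The coefficient formula you extract from the lemme~\ref{lemme_ABC} as a polynomial identity is valid; the identification of the admissible $T_0$'s in $\COR\diam(Y,-\pun)$ with complements of sets of leaves is right (and you handle the boundary case $T=\pun$ properly); and the inclusion--exclusion collapses the double sum to $T_0=T$ exactly as you say.

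Your route, however, differs from the paper's. The paper does not expand coefficients at all: it first invokes the associativity relation~\eqref{associatif} to rewrite
\[
  \COR\diam(\COR\diam(A,E),-\pun)=\COR\diam(A,\,\COR\diam(E,-\pun)),
\]
so that the whole statement reduces to showing $\COR\diam(E,-\pun)=0$. That vanishing is then obtained by a polynomiality trick: for a nonnegative integer $k$, the coefficients of $\COR\diam(E,k\,\pun)$ count trees whose leaves carry a colour among $k+1$, hence are divisible by $k+1$; since these coefficients are polynomials in $k$, they vanish at $k=-1$. Your approach trades the use of~\eqref{associatif} for a direct inclusion--exclusion, which makes the proof self-contained with respect to the operadic machinery but slightly longer; the paper's approach is more structural and isolates the reusable fact $\COR\diam(E,-\pun)=0$.
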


\begin{proof}
  En utilisant la formule \eqref{associatif}, il s'agit de calculer
  l'expression équivalente
  \begin{equation*}
    \COR \diam (A, \COR \diam (E,-\pun)).
  \end{equation*}
  On va considérer, pour $k$ entier positif, l'expression
  \begin{equation*}
    \COR \diam (E,k\,\pun).
  \end{equation*}
  Les coefficients de cette série comptent les arbres dont chaque
  feuille est soit vide, soit décorée par une couleur parmi $k$. On
  peut aussi voir ceci comme des arbres dont toutes les feuilles sont
  décorées par une couleur parmi $k+1$. Par conséquent, tous les
  coefficients sont divisibles par $k+1$. Donc
  \begin{equation*}
    \COR \diam (E, - \,\pun) = 0
  \end{equation*}

  Il en résulte que
  \begin{equation*}
    \COR \diam (A, \COR \diam (E,- \,\pun)) = A.
  \end{equation*}
\end{proof}

\appendix
\section{Termes initiaux}

\label{debut}

La série $\sqx$ commence par
\begin{multline*}
  (1+q x) \pun + \frac{(1+q x)(1+q +q^2 x)}{\Phi_2}\arb{10} \\ +\frac{(1+q x)(1+q +q^2 x)(1+q +q^2 +q^3 x)}{\Phi_2\Phi_3}\arb{110} \\ + \frac{(1+q x)(1+q +q^2 x)(1+q +q^2 +q^2 x + q^3 x)}{\Phi_2\Phi_3} \frac{\arb{200}}{2} +\,\cdots
\end{multline*}

La série $\baro_q$ commence par
\begin{multline*}
\baro_q=\pun+\frac{1}{\Phi_2}\arb{10}+\frac{1}{\Phi_3} 
\arb{110}+\frac{1}{\Phi_2 \Phi_3}\frac{\arb{200}}{2} \\
+\frac{1}{\Phi_2 \Phi_4}
\arb{1110}+\frac{1}{\Phi_3 \Phi_4}\frac{\arb{1200}}{2}+\frac{1}{\Phi_2 \Phi_3
\Phi_4}\arb{2100} +\frac{1-q}{\, \Phi_2\Phi_3\Phi_4}\frac{\arb{3000}}{6}
+\,\cdots
\end{multline*}

\bibliographystyle{plain}
\bibliography{serieqx}

\end{document}